\newcommand{\newsectionstyle}{%
  \renewcommand{\@secnumfont}{\bfseries}
 \renewcommand\section{\@startsection{section}{1}%
\z@{.7\linespacing\@plus\linespacing}{.5\linespacing}%
{\large\bfseries\centering}}
}
\let\oldsection\section%
\let\old@secnumfont\@secnumfont%
\newcommand{\originalsectionstyle}{%
  \let\@secnumfont\old@secnumfont
  \let\section\oldsection
}
\newenvironment{nouppercase}{%
  \renewcommand{\uppercasenonmath}[1]{}}{}
\newcommand{\tld}{\widetilde} 
\newcommand{\nb}{\nabla}
\newcommand{\p}{\partial}
\newcommand{\ep}{\varepsilon}
\newcommand{\R}{\mathbb R}
\newcommand{\Z}{\mathbb Z}
\DeclareMathOperator {\argmin}{argmin}
\DeclareMathOperator {\dist}{dist}
\newtheorem{theorem}{Theorem}
\newtheorem{proposition}[theorem]{Proposition}
\newtheorem{lemma}[theorem]{Lemma}
\newtheorem{corollary}[theorem]{Corollary}
\theoremstyle{definition}
\newtheorem{definition}[theorem]{Definition}
\theoremstyle{remark}
\newtheorem{remark}[theorem]{Remark}
\numberwithin{equation}{section}
\numberwithin{theorem}{section}
\begin{document}
 \title[A minimising movement scheme for the $p$-elastic energy of curves]{\LARGE\mdseries 
 A minimising movement scheme for the $p$-elastic energy of curves}
 \author[Blatt]{Simon Blatt}
 \address[Simon Blatt]{Departement of Mathematics, Paris Lodron Universit\"at Salzburg, Hellbrunner Strasse 34, 5020 Salzburg, Austria}
\email[Simon Blatt]{simon.blatt@sbg.ac.at}
\author[Hopper]{Christopher P. Hopper}
 \address[Christopher P. Hopper]{Departement of Mathematics, Paris Lodron Universit\"at Salzburg, Hellbrunner Strasse 34, 5020 Salzburg, Austria}
\email[Christopher P. Hopper]{christopher.hopper@sbg.ac.at}
\author[Vorderobermeier]{Nicole Vorderobermeier}
 \address[Nicole Vorderobermeier]{Departement of Mathematics, Paris Lodron Universit\"at Salzburg, Hellbrunner Strasse 34, 5020 Salzburg, Austria}
\email[Nicole Vorderobermeier]{nicole.vorderobermeier@sbg.ac.at}

\thanks{All three authors acknowledge support by the Austrian Science Fund (FWF), Grant P29487. Nicole Vorderobermeier also acknowledges funding by the Austrian Marshall Plan Foundation. }

\keywords{minimising movement, p-elastic energy for curves curves, gradient flow, approximate normal graphs}

\subjclass[2010]{53C44, 53A04}
 
  \begin{abstract}
We prove short-time existence for the negative $L^2$-gradient flow of the $p$-elastic energy of curves via a minimising movement scheme.
In order to account for the degeneracy caused by the energy's invariance under curve reparametrisations,
we write the evolving curves as approximate normal graphs over a fixed smooth curve.
This enables us to establish short-time existence and give a lower bound on the solution's lifetime that depends only on the $W^{2,p}$-Sobolev norm of the initial data.
\end{abstract}  
  
\begin{nouppercase}
\maketitle
\end{nouppercase}
\newsectionstyle

\section{Introduction}
\noindent
For closed curves  $\gamma\colon \mathbb R / \mathbb Z \rightarrow \mathbb R^n$ in the $W^{2,p}$-Sobolev class we shall consider the energy 
 \begin{equation} \label{eqn:pEnergy}
  E (\gamma) = \frac{1}{p} \int_{\mathbb R / \mathbb Z} |\kappa|^p ds 
  + \lambda \int_{\mathbb R / \mathbb Z}   ds ,
 \end{equation}
 i.e. the sum of the $p$-elastic energy $E^{(p)}(\gamma) = \tfrac{1}{p} \int_{\mathbb R / \mathbb Z} |\kappa|^p ds$  and a positive multiple $\lambda >0$ of the length of the curve.
A family of regular curves $\gamma=\gamma(t,s)\colon  [0,T) \times \mathbb  R / \mathbb Z \rightarrow \mathbb R^n$ in the class 
$$L^\infty\big([0,T), W^{2,p}(\mathbb R / \mathbb Z, \mathbb R^n)\big) \cap W^{1,2}
\big([0,T) ,L^2 (\mathbb R / \mathbb Z , \mathbb R^n) \big)$$ 
is said to be a weak solution of the negative $L^2$-gradient flow of $E$
if one has 
 \begin{equation} \label{eq:EvolutionEquation}
  \int_{0}^T \int_{\mathbb R / \mathbb Z} \langle \partial_t \gamma ,\psi \rangle \, ds dt = \int_{0}^T \delta_{\psi_t} E (\gamma_t) \, dt
 \end{equation}
for all test functions $\psi  \in  C^\infty_c (\mathbb R / \mathbb Z \times (0,T) , \mathbb R^n)$,
i.e.~the curve $\gamma$ satisfies 
$\partial_t \gamma = - \nabla_{L^2} E (\gamma)$ weakly,
where 
 $\delta_{\psi_t} E (\gamma_t) = \frac d {d\varepsilon } \left. E (\gamma_t + 
\varepsilon \psi_t) \right|_{\varepsilon = 0} $ 
is the first variation of the functional $E$ at the curve $\gamma_t=\gamma(t, \,\cdot)$ in the direction of the test function $\psi_t=\psi(t, \,\cdot)$.

While the $L^2$-gradient flow of \eqref{eqn:pEnergy} has been extensively studied when $p=2$, both in the Euclidean (cf.~\cite{Huisken1999, Dziuk2002,DallAcqua2017, DallAcqua2019}) and manifold constrained (cf.~\cite{Huisken1999, DallAcqua2018, Mueller2019}) settings, very little is know in the degenerate $p\neq 2$ case. 
For example a second order evolution equation has been considered 
for closed curves and planar networks (cf.~\cite{Okabe2018,Novaga2020}) and the asymptotic of the flow has been studied away from degenerate point (cf.~\cite{Pozzetta2019}), 
however short-time existence for the equation \eqref{eq:EvolutionEquation} has yet to be established 
when $p\neq 2$. 
The aim of this article is to address both short and long time existence in the case $p>2$  for the geometric evolution \eqref{eq:EvolutionEquation}  with   initial data 
in the $W^{2,p}$-Sobolev class. %
Our approach is to rewrite the evolving curves as approximate normal graphs 
in order to utilise de Giorgi's method of minimising movements (cf.~\cite{DeGiorgiMM}).

It is well known that the invariance of the energy \eqref{eqn:pEnergy} under reparametrisations of the curve $\gamma$ leads to an evolution equation \eqref{eq:EvolutionEquation} that fails to be 
strongly %
parabolic (even in the $p=2$ case). 
This characteristic is in common with many other geometric evolution equations. 
For example  
the failure of the strong ellipticity of the Ricci tensor
is principally due to the second Bianchi identities.\footnote{In fact the diffeomorphism invariance of the Riemannian curvature tensor naturally yields the Bianchi identities (cf.~\cite{Kazdan1981}). Thus the strongly ellipticity failure of the Ricci tensor is due entirely to this geometric invariance.}  
For this reason, short-time existence for the Ricci flow was 
originally established in \cite{Hamilton1982a}
by appealing to the Nash-Moser implicit-function theorem 
(and the earlier exposition in \cite{hamilton1982NashMoser}). 
DeTurck \cite{DeTurck1983} subsequently showed  that the Ricci flow is equivalent to an initial value problem for a parabolic system modulo the action of the diffeomorphism group of the underlying manifold.
Thus, in a dramatic simplification that bypassed the Nash-Moser argument,  one can pass from a weakly parabolic  to a strongly parabolic system of equations
by an appropriate choice of a 1-parameter family of diffeomorphisms. 
Perelman \cite{Perelman2002} also exploited the same 
diffeomorphism invariance 
in his gradient flow formalism for the Ricci flow.
Versions of the DeTurck trick
have also been used to obtain short-time existence for the mean curvature flow 
(cf.~\cite{Huisken1984,Baker}), the Willmore flow  (cf.~\cite{Jakob2018})
and the gradient flow of the elastic energy 
in both the Euclidean and  manifold constrained cases.

In seeking to pass from the degenerate flow \eqref{eqn:pEnergy} to a strongly parabolic system,
one can consider a time dependent family of curves  
$\gamma_t = \gamma(t, \,\cdot \,)$  
that are written as a normal graph over a given fixed smooth curve 
$\tld \gamma$,  
i.e.~a family of curve of the form $\gamma_t = \tld \gamma + \phi_t$ where 
$\phi_t = \phi(t, \,\cdot\,)$ is a perturbation normal to the fixed curve $\tld \gamma$. 
In this way we obtain an evolution equation of the form
\begin{equation}\label{eq:EvolutionEquationNormal}
  \int_{0}^T \int_{\mathbb R / \mathbb Z} \langle \gamma, \partial_t^\bot  \psi\rangle\, ds dt = \int_{0}^T \delta_{\psi_t} E (\gamma_t) \, dt
\end{equation}
for all test functions $\psi  \in  C^\infty_c (  (0,T) \times \mathbb R / \mathbb Z , \mathbb R^n)$,
i.e.~the curve $\gamma$ satisfies 
$\partial_t^\bot \gamma = - \nabla_{L^2} E (\gamma)$ weakly,
where  the normal velocity
$\partial_t^\bot \gamma$ is the vector component of $\partial_t \gamma$ normal to the fixed curve $\tld \gamma$. 
Then in order to obtain a solution of  \eqref{eq:EvolutionEquation} from a solution  of \eqref{eq:EvolutionEquationNormal},
one can consider solutions $\Theta_t = \Theta(t, \,\cdot\,)$ of  the ordinary differential equation 
\begin{equation} \label{eqn:ODE} 
 \left.\begin{aligned}
       \partial_t \Theta(t,x) & = F(t,\Theta(t,x)) \\
       \Theta(0,x)  &=x ,
       \end{aligned}
 \right .
\end{equation}
where 
$F(t,y) = -\frac{\langle \partial_t \gamma (t,y), \gamma'(t,y)\rangle }{|\gamma'(t,y)|^2}$ 
and $\gamma$ is a solution of \eqref{eq:EvolutionEquationNormal}.
The existence of   ODE solutions can thus be established on a time interval $0 \leq t < \varepsilon$ for some  $\varepsilon>0$ 
independent of the initial point $x\in\R /\Z$.
Therefore if
$\Theta_t = \Theta(t, \,\cdot\,)$ is a solution of \eqref{eqn:ODE} and
$\gamma_t = \gamma(t,\,\cdot\,)$ is a solution of \eqref{eq:EvolutionEquationNormal}, the composition     
$\gamma_t \circ \Theta_t$ 
is a solution of \eqref{eq:EvolutionEquation}.
By taking this approach one can thus establish the existence of solutions for geometric flows 
with initial data in the $C^{2,\alpha}$-H\"older class 
even though  the original equations may be  ill-defined    
(see, e.g., \cite{Escher1998,Mayer2000,Simonett2001}). 
In fact a recent paper by LeCrone, Shao and Simonett \cite{LeCrone2018} showed how to reduce the regularity of the initial data to the $C^{1,\alpha}$-H\"older class. 

In order to carry out the aforementioned programme, one has to guarantee that 
a given initial curve  $\Gamma$ 
can be written as a normal graph over a fixed smooth curve $\tld \gamma$.    
Since it is not possible to write every curve $\Gamma$ in the  $W^{2,p}$-Sobolev class
as a normal graph over a smooth curve, 
we are spurred on to introduce  
the notion of a unit quasi-tangent $\tau$ 
(cf.~Definition~\ref{def:quasitangent}) 
which then defines 
an \emph{approximate tangential projection} $P_\tau^T $ and  
an \emph{approximate normal projection} $P_\tau^\bot = I  - P_\tau^T$ 
(cf.~Definition~\ref{def:phiNormalSpace}).
In which case, one can write the curve $\Gamma$ as equal to $\tld \gamma + \Phi$ up to a reparametrisation, i.e.~as an approximate normal graph over a smooth curve  
$\tld \gamma  $
with some perturbation $\Phi$ orthogonal to $\tau$ (cf.~Lemma~\ref{lem:approxNormalGraph}).
Then by applying a minimising movements scheme,
 it is  possible to establish the existence of a 
 family of curves of the form 
 $\gamma_t = \tld \gamma  + \phi_t$,
 for a suitable perturbation $\phi_t$ orthogonal to $\tau$, 
 that satisfies 
$\p_t^\bot \gamma = - \nb_{L^2} E(\gamma)$ weakly. 
Indeed, we have:

\begin{theorem}[Existence] \label{thm:weakSolution}
For any given initial curve 
$\Gamma \in W^{2,p}(\mathbb R /L \mathbb Z, \mathbb R^n)$
parametrised by arc-length
there exists
a smooth curve  $\tld \gamma   \in C^\infty(\R/L\Z,\R^n)$  parametrised by arc-length,
a quasi-tangent $\tau$ to the   curve $\tld \gamma$, %
a finite   time $T=T(p,\lambda, E(\Gamma))>0$ 
and a family of  perturbations $\phi$  
in the class 
\[
L^\infty \big([0,T),W^{2,p}(\R/L\Z,\R^n) \big) \cap 
\big(W^{1,2} \cap C^{1/ 2}\big) \big ([0,T),L^2(\R/L\Z,\R^n) \big)  
\]
which are orthogonal to $\tau$ 
such that 
the family of curves 
$$\gamma(t,s)  = \tld \gamma(s) + \phi(t,s),\quad  0\leq t<T,$$
satisfies  the initial condition 
$\gamma(0, \,\cdot\, ) = \Gamma\circ \sigma$ 
for some reparametrisation  $\sigma$ of $\R/L\Z$
and 
\begin{equation}\label{eqn:mainthm}
  \int_{0}^T \int_{\mathbb R / L\mathbb Z}
  \langle \partial_t^\bot  \gamma ,\psi\rangle\, ds dt 
  = - \int_{0}^T \delta_{\psi_t} E (\gamma_t) dt
 \end{equation} 
for all  test functions $\psi \in C^\infty_c((0,T) \times \mathbb R / L\mathbb Z   ,\R^n )$ 
 orthogonal to  $\tau$.
\end{theorem}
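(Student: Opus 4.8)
\emph{Step 1 (Set-up).}
The plan is to realise the flow by de Giorgi's minimising movement scheme over the class of approximate normal graphs. First I would invoke Lemma~\ref{lem:approxNormalGraph} to fix a smooth arc-length curve $\tld\gamma$, a unit quasi-tangent $\tau$ (Definition~\ref{def:quasitangent}) and a reparametrisation $\sigma$ with $\Gamma\circ\sigma=\tld\gamma+\Phi_0$, $\Phi_0$ orthogonal to $\tau$ in the sense of Definition~\ref{def:phiNormalSpace}; by taking $\tld\gamma$ sufficiently $W^{2,p}$-close to $\Gamma$ one may assume $\|\Phi_0\|_{W^{2,p}}$ as small as desired. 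Fix a tube radius $\delta_0\in(0,1)$ such that every $\phi\perp\tau$ with $\|\phi\|_{W^{1,\infty}}\le\delta_0$ makes $\tld\gamma+\phi$ a regular curve, and let $\ann$ be the (weakly closed) class of such $\phi$ in $W^{2,p}(\R/L\Z,\R^n)$. For a step size $h>0$ set $\phi^h_0=\Phi_0$ and recursively let $\phi^h_{k+1}$ minimise $\phi\mapsto E(\tld\gamma+\phi)+\tfrac1{2h}\|\phi-\phi^h_k\|_{L^2}^2$ over $\ann$. A minimiser exists by the direct method: along a minimising sequence the energy and the penalisation are bounded, hence (by coercivity of $E$ over approximate normal graphs together with smoothness of $\tld\gamma$) the $W^{2,p}$-norm is bounded; $E$ is lower semicontinuous and the penalisation weakly continuous under weak $W^{2,p}$-convergence, and $\ann$ is weakly closed.

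\emph{Step 2 (A priori estimates and the discrete equation).}
Comparing $\phi^h_{k+1}$ with $\phi^h_k$ and telescoping gives $E(\tld\gamma+\phi^h_k)\le E(\Gamma)$ for all $k$ and the dissipation bound $\sum_k\|\phi^h_{k+1}-\phi^h_k\|_{L^2}^2\le 2hE(\Gamma)$; by Cauchy--Schwarz, $\|\phi^h_k-\Phi_0\|_{L^2}\le\sqrt{2TE(\Gamma)}$ whenever $kh\le T$. The uniform energy bound yields a uniform $W^{2,p}$-bound, hence a uniform $C^{1,1-1/p}$-bound on the $\phi^h_k$, and interpolating this against the $L^2$-smallness just obtained shows that, for $T=T(p,\lambda,E(\Gamma))$ small enough and $h$ small, $\|\phi^h_k\|_{W^{1,\infty}}<\delta_0$ for all $kh\le T$. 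Thus the constraint defining $\ann$ is never active, each $\phi^h_{k+1}$ is an interior minimiser of a $C^1$ functional, and it satisfies the discrete Euler--Lagrange equation
\[
\tfrac1h\langle\phi^h_{k+1}-\phi^h_k,\psi\rangle_{L^2}=-\,\delta_\psi E(\tld\gamma+\phi^h_{k+1})\qquad\text{for all }\psi\in W^{2,p}(\R/L\Z,\R^n),\ \psi\perp\tau.
\]

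\emph{Step 3 (Compactness).}
Let $\bar\phi^h$ and $\hat\phi^h$ be the piecewise constant and piecewise affine time-interpolants of $(\phi^h_k)_{kh\le T}$. Step~2 gives: $\bar\phi^h,\hat\phi^h$ bounded in $L^\infty([0,T],W^{2,p})$; $\partial_t\hat\phi^h$ bounded in $L^2([0,T],L^2)$, hence $\hat\phi^h$ bounded in $C^{1/2}([0,T],L^2)$; and $\|\bar\phi^h-\hat\phi^h\|_{L^\infty([0,T],L^2)}\to0$. Interpolating the $C^{1/2}([0,T],L^2)$- and $L^\infty([0,T],W^{2,p})$-bounds and applying Arzel\`a--Ascoli, along a subsequence $\bar\phi^h,\hat\phi^h\to\phi$ in $C^0([0,T],C^1)$, weakly-$*$ in $L^\infty([0,T],W^{2,p})$, with $\partial_t\hat\phi^h\rightharpoonup\partial_t\phi$ in $L^2([0,T],L^2)$. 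The limit $\phi$ belongs to the stated class, is orthogonal to $\tau$, and satisfies $\phi(0,\cdot)=\Phi_0$, so $\gamma=\tld\gamma+\phi$ meets the initial condition $\gamma(0,\cdot)=\Gamma\circ\sigma$; moreover $\partial_t^\bot\gamma=\partial_t\phi$ since $\tld\gamma$ is time-independent and $\phi\perp\tau$.

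\emph{Step 4 (Passing to the limit --- the crux).}
Writing the discrete Euler--Lagrange equation for the interpolants, for every $\psi\in C^\infty_c((0,T)\times\R/L\Z,\R^n)$ orthogonal to $\tau$ one has $\int_0^T\langle\partial_t\hat\phi^h,\psi^h\rangle_{L^2}\,dt=-\int_0^T\delta_{\psi^h}E(\tld\gamma+\bar\phi^h)\,dt$ with $\psi^h$ a time-discretisation of $\psi$, and the left-hand side converges to $\int_0^T\langle\partial_t^\bot\gamma,\psi\rangle_{L^2}\,dt$. The difficulty --- which I expect to be the main obstacle --- is the right-hand side: the first variation pairs the degenerate nonlinearity $|\kappa|^{p-2}\kappa$ with second derivatives of the variation and contains terms pairing $|\kappa|^p$ with first derivatives of the variation, and the weak $W^{2,p}$-convergence of $\bar\phi^h$ --- i.e.\ only weak $L^p$-convergence of the curvatures $\kappa^h$ of $\tld\gamma+\bar\phi^h$ --- is \emph{not} enough; one must upgrade to strong $L^p$-convergence of $\kappa^h$ to kill these nonlinearities and the attendant defect measure. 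To this end I would run a Minty--Browder argument adapted to the scheme: test the discrete equation against the admissible direction $\bar\phi^h-\Psi^h$, where $\Psi^h$ is piecewise constant in time and equal on $(kh,(k+1)h]$ to the time average $\tfrac1h\int_{kh}^{(k+1)h}\phi(s,\cdot)\,ds$ --- the point of using the average rather than a pointwise sample being that the curvature $K^h$ of $\tld\gamma+\Psi^h$ then converges \emph{strongly} in $L^p([0,T]\times\R/L\Z)$ to $\kappa$. Using $\|\partial_t\hat\phi^h\|_{L^2([0,T],L^2)}\le C$ and $\bar\phi^h-\Psi^h\to0$ in $L^2([0,T],L^2)$ gives $\int_0^T\delta_{\bar\phi^h-\Psi^h}E(\tld\gamma+\bar\phi^h)\,dt\to0$; since $\bar\phi^h-\Psi^h\to0$ in $C^0([0,T],C^1)$, the length variation and every lower-order contribution vanish, while in the principal term the tangential part of the variation annihilates $|\kappa^h|^{p-2}\kappa^h$, leaving
\[
\int_0^T\!\!\int_{\R/L\Z}|\kappa^h|^p\,w^h\,ds\,dt-\int_0^T\!\!\int_{\R/L\Z}\big\langle|\kappa^h|^{p-2}\kappa^h,\,K^h\big\rangle w^h\,ds\,dt+o(1)\ \longrightarrow\ 0,
\]
with $w^h\to w\ge c>0$ uniformly. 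Passing to the limit in the second integral (weak $L^{p'}$ against strong $L^p$) identifies $\limsup_h\int_0^T\!\int|\kappa^h|^p w^h$ with $\int_0^T\!\int\langle\chi,\kappa\rangle w$, where $\chi$ is the weak $L^{p'}$-limit of $|\kappa^h|^{p-2}\kappa^h$; combined with the monotonicity inequality $\langle|a|^{p-2}a-|b|^{p-2}b,a-b\rangle\ge c_p|a-b|^p$ valid for $p\ge2$, the Minty argument forces $\chi=|\kappa|^{p-2}\kappa$ and $\kappa^h\to\kappa$ strongly in $L^p([0,T]\times\R/L\Z)$. With this strong convergence all the nonlinear factors in $\delta_{\psi^h}E(\tld\gamma+\bar\phi^h)$ converge strongly in the relevant Lebesgue spaces and every term passes to the limit, so the right-hand side tends to $-\int_0^T\delta_\psi E(\gamma_t)\,dt$ and \eqref{eqn:mainthm} follows. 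The remaining ingredients --- coercivity and lower semicontinuity of $E$ over approximate normal graphs, the interpolation estimates keeping the scheme inside the regular tube, and the identification of the normal velocity --- are routine once the quasi-tangent framework of Definition~\ref{def:quasitangent}, Definition~\ref{def:phiNormalSpace} and Lemma~\ref{lem:approxNormalGraph} is available.
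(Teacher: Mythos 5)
Your overall strategy coincides with the paper's: decompose $\Gamma$ as an approximate normal graph via Lemma~\ref{lem:approxNormalGraph}, run de Giorgi's scheme inside a $W^{1,\infty}$-tube kept non-active for a short time by the energy decay, the dissipation bound and Gagliardo--Nirenberg interpolation, and extract a limit in $L^\infty W^{2,p}\cap C^{1/2}L^2$. You also correctly isolate the crux, namely upgrading weak $L^p$-convergence of the curvatures to strong convergence so that $|\kappa|^{p-2}\kappa$ passes to the limit. Where you genuinely diverge is in how this upgrade is achieved. The paper first derives the discrete Euler--Lagrange equation, applies an ODE-type $L^1_{\mathrm{loc}}$ regularity lemma (Lemma~\ref{lem:L1locregularity}) to obtain a uniform bound on $\||\kappa|^{p-2}P^\bot_\tau\kappa\|_{L^2W^{1,1}}$, and then proves strong $L^2L^p$-convergence of the curvatures by combining the monotonicity inequality $\int|\kappa_1-\kappa_2|^p\le C\int(|\kappa_1|^{p-2}\kappa_1-|\kappa_2|^{p-2}\kappa_2)(\kappa_1-\kappa_2)$ with a \emph{spatial} integration by parts that trades the extra derivative for the uniform $C^1$-convergence of $\gamma'$ (Lemma~\ref{lem:kappaconverge}, Theorem~\ref{thm:Compactness}). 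You instead run a Minty--Browder argument, testing the discrete equation against $\bar\phi^h-\Psi^h$ with $\Psi^h$ a time-average of the limit, and use the same monotonicity inequality pointwise. Your route avoids the higher-regularity step entirely and is closer to the standard parabolic $p$-Laplacian machinery; the paper's route is heavier but yields as a by-product the uniform $W^{1,1}$-control of $|\kappa|^{p-2}P^\bot_\tau\kappa$, which the authors then exploit for Corollary~\ref{cor:weakSolution2} and their discussion of higher regularity. For your version to close, the bookkeeping in the principal term needs care: $(\partial_s^2(\bar\phi^h-\Psi^h))^\bot$ equals $\kappa^h-K^h$ only up to errors coming from the mismatch of arc-length elements and normal projections of the two curves, and these must be absorbed using the $C^0C^1$-convergence of $\bar\phi^h-\Psi^h$ against the $L^{p'}$-bound on $|\kappa^h|^{p-2}\kappa^h$; this is doable but is exactly the kind of step you have deferred to ``routine''. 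The lower semicontinuity in Step~1 is likewise not immediate, since the integrand is not convex in $\phi''$ with the coefficients $P^\bot_{\tld\gamma'+\phi'}$ and $|\tld\gamma'+\phi'|$ depending on $\phi'$; the paper freezes these coefficients at the limit and controls the difference by the $C^1$-convergence.

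One concrete discrepancy: you penalise with the plain $\tfrac1{2h}\|\phi-\phi^h_k\|_{L^2(dx)}^2$, whereas the paper uses $\tfrac1{2h}\int|P^\bot_{\gamma_{t_j}'}(\tld\gamma+\phi-\gamma_{t_j})|^2\,|\gamma_{t_j}'|\,dx$. Your choice produces, in the limit, the pairing $\int\langle\partial_t\phi,\psi\rangle\,dx\,dt$ rather than $\int\langle\partial_t^\bot\gamma,\psi\rangle\,ds\,dt$: the arc-length weight $|\gamma'|$ is missing and the velocity is projected onto $\tau^\bot$ rather than onto the normal of the evolving curve. Since $|\gamma'|\not\equiv1$ along the flow, this is a genuinely different left-hand side, so your scheme as written converges to a metrically modified version of \eqref{eqn:mainthm}. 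The fix is simply to adopt the weighted, projected penalisation of the paper; none of your subsequent estimates is affected.
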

 
 Note that the time of existence only depends on the energy of the initial curve. So we are very close to restarting the flow and deduce long time existence. We discuss in the final section, why this is not as straightforward as it might seem.

By assuming the solution has some additional regularity, 
one can show that equation \eqref{eqn:mainthm} holds for all test functions  (i.e.~our solution solves the original weak form of the desired evolution equation).

\begin{corollary} \label{cor:weakSolution2}
If the solution $ \gamma(t, \,\cdot\, )$ of 
Theorem~\ref{thm:weakSolution} belongs to the $W^{3,p}$-Sobolev class for almost all $0\leq t<T$, then
$$
 \int_0^T \int_{\mathbb R / L\mathbb Z}  
 \langle \partial^\bot_t  \gamma ,\psi\rangle ds dt 
 = - \int_0^T \delta_{\psi_t} E(\gamma) dt
$$
for all test functions $\psi \in C^\infty_c ((0,T) \times \R /L \Z , \R^n)$.
\end{corollary}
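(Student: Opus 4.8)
The plan is to split an arbitrary test function $\psi \in C^\infty_c((0,T) \times \R/L\Z, \R^n)$ into its approximate normal and approximate tangential components with respect to the quasi-tangent $\tau$, namely $\psi = P_\tau^\bot \psi + P_\tau^T \psi$. Since $P_\tau^\bot \psi$ is already orthogonal to $\tau$ (and remains smooth and compactly supported, because $\tau$ is a fixed smooth field along $\tld\gamma$), Theorem~\ref{thm:weakSolution} applies directly to it and gives
\[
 \int_0^T \int_{\R/L\Z} \langle \partial_t^\bot \gamma, P_\tau^\bot \psi \rangle\, ds\, dt = -\int_0^T \delta_{P_\tau^\bot \psi_t} E(\gamma_t)\, dt .
\]
Since $\partial_t^\bot \gamma$ is by definition orthogonal to $\tau$, the left-hand side above equals $\int_0^T \int_{\R/L\Z} \langle \partial_t^\bot \gamma, \psi\rangle\, ds\, dt$ (the tangential part of $\psi$ pairs to zero against $\partial_t^\bot\gamma$). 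Hence it suffices to show that the remaining tangential contribution to the first variation vanishes, i.e.
\[
 \int_0^T \delta_{P_\tau^T \psi_t} E(\gamma_t)\, dt = 0 .
\]

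First I would use the reparametrisation invariance of $E$ to identify $\delta_{\eta_t} E(\gamma_t)$, for $\eta_t$ a \emph{tangential} vector field along $\gamma_t$ (i.e.\ a multiple of $\gamma_t'$), as being zero. The standard computation is: if $\eta = f\,\tfrac{\gamma'}{|\gamma'|}$ for a smooth scalar $f$, then $\gamma + \ep\eta$ is, to first order in $\ep$, a reparametrisation of $\gamma$, so $E(\gamma+\ep\eta) = E(\gamma) + o(\ep)$ and therefore $\delta_\eta E(\gamma) = 0$. This identity requires enough regularity of $\gamma_t$ to make sense of the first variation with its curvature terms — and this is precisely where the extra $W^{3,p}$ hypothesis enters, ensuring that $\kappa$ and its relevant derivatives are integrable against the test vector field so that the first-variation formula (and the integration by parts producing it) is valid for \emph{all} smooth directions, not merely the normal ones.

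The remaining point is that $P_\tau^T \psi_t$ is not literally tangential to $\gamma_t$; it is tangential to the \emph{quasi-tangent} $\tau$ along $\tld\gamma$. So I would write $P_\tau^T \psi_t = g_t\,\tau$ for a smooth scalar $g_t$, compare $\tau$ with the true unit tangent $\gamma_t'/|\gamma_t'|$, and decompose $g_t\tau$ into a genuinely tangential part $g_t\langle\tau,\gamma_t'/|\gamma_t'|\rangle\,\gamma_t'/|\gamma_t'|$ plus an error that is orthogonal to $\gamma_t'$. The genuinely tangential part contributes zero to $\delta E(\gamma_t)$ by the previous step. For the error term I would invoke Theorem~\ref{thm:weakSolution} once more — or rather, observe that adding this error to $P_\tau^\bot\psi$ gives a (not necessarily $\tau$-orthogonal, but normal-to-$\gamma_t$) perturbation whose first variation is already controlled by the weak equation together with the vanishing of the tangential part. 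The main obstacle I anticipate is bookkeeping: making the splitting $\psi = (\text{tangential to }\gamma_t) + (\text{normal to }\gamma_t)$ time-measurable and integrable in $t$, since $\gamma_t'$ depends on $t$ only in $L^\infty([0,T),W^{1,p})$ and the decomposition coefficients inherit only this regularity; one must check that all the resulting integrals over $[0,T]$ converge, which is exactly what the $W^{3,p}$ assumption — via control of $\delta_{\psi_t}E(\gamma_t)$ pointwise in $t$ — is designed to guarantee.
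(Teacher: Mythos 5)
Your overall strategy --- peel off a tangential direction, kill it by reparametrisation invariance, and feed the rest to Theorem~\ref{thm:weakSolution} --- is the right one, and your treatment of the left-hand side (using $\partial_t^\bot\gamma\perp\tau$ to discard $P_\tau^T\psi$) is fine. The gap is in the step where you must show $\int_0^T\delta_{P_\tau^T\psi_t}E(\gamma_t)\,dt=0$; note that this is precisely the assertion of the corollary for test functions of the form $g\tau$, so you have only reduced to that case, not disposed of it. Writing $P_\tau^T\psi=g\tau$ and splitting off the genuinely tangential part $g\,\langle\tau,\tfrac{\gamma'}{|\gamma'|}\rangle\,\tfrac{\gamma'}{|\gamma'|}$ leaves an error $e=g\tau-g\,\langle\tau,\tfrac{\gamma'}{|\gamma'|}\rangle\,\tfrac{\gamma'}{|\gamma'|}$ that is orthogonal to the \emph{true} tangent but not to the quasi-tangent $\tau$, since $\langle e,\tau\rangle=g\,(1-\langle\tau,\tfrac{\gamma'}{|\gamma'|}\rangle^2)$ does not vanish in general. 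Hence Theorem~\ref{thm:weakSolution} does not apply to $e$, reparametrisation invariance does not kill it, and your suggestion to absorb it into $P_\tau^\bot\psi$ merely reproduces the original difficulty: a $\tau$-non-orthogonal direction whose first variation is controlled by nothing you have established. As written, this step is circular.

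The paper sidesteps the error term by choosing the \emph{oblique} decomposition from the start: since the angle between the true unit tangent $\tfrac{\gamma'}{|\gamma'|}$ and $\tau$ is bounded away from $\tfrac\pi2$, one has the direct sum $\R^n=\R\,\tfrac{\gamma'}{|\gamma'|}\oplus(\R\tau)^\perp$, and every test function is written as $\psi=\psi_0\tfrac{\gamma'}{|\gamma'|}+\sum_{i=1}^{n-1}\psi_i\nu_i$ with $\nu_1,\dots,\nu_{n-1}$ a smooth basis of the approximate normal space. Each summand then lands exactly where one of your two tools applies: the first is genuinely tangential (its first variation vanishes), the second is $\tau$-orthogonal (Theorem~\ref{thm:weakSolution} applies). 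This is also where the $W^{3,p}$ hypothesis genuinely enters: the coefficients $\psi_0,\psi_i$ involve $\tfrac{\gamma'}{|\gamma'|}$ and must belong to $W^{2,p}$ for the decomposed directions to be admissible variations (the first variation of $E$ contains $\partial_s^2$ of the variation direction), which forces $\gamma'\in W^{2,p}$. Your stated reason for the hypothesis --- integrability of derivatives of $\kappa$ against the test field --- is not quite the point, although your closing remark about the regularity of the decomposition coefficients comes closer.
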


\section{Minimising movements scheme}
\noindent
It is remarked by De Giorgi \cite{DeGiorgiMM}
that a generalised minimising movements scheme could provide a formalism for
the existence of steepest descent curves of a functional in a metric space. 
In order to establish the existence of weak solutions for \eqref{eq:EvolutionEquation},
we need to 
take care of the
twofold degeneracies arising from the 
invariance of \eqref{eqn:pEnergy} under curve reparametrisation and the fact that $p>2$.
We tackle this issue 
by writing the evolving curve as an approximate normal graph over a fixed smooth curve
so that we can work with 
the normal velocity (rather than the time derivative) of 
the evolving curve.

\subsection{Tubular neighbourhoods} \label{sec:Tubenbd}
For an embedded $C^k$-submanifold  $\mathcal{M}$ of $\R^n$  without boundary,
the normal bundle $(T\mathcal{M})^\perp \to \mathcal{M}$ is only of the class $C^{k-1}$. 
If we define the `endpoint' map $E\colon (T\mathcal{M})^\perp \to \R^n$   by sending $$(x,v) \mapsto x+v$$ and assume  $k\geq 2$,  
one can use the inverse function theorem to show that there exists 
a tubular neighbourhood $U$ of $\mathcal{M}$ in $\R^n$
that is the diffeomorphic image under the $C^{k-1}$-map $E$ of an open neighbourhood of the zero section of  $(T\mathcal{M})^\perp$.
Moreover, 
the squared distance function $\zeta (x) = \frac 12 \text{dist}(x,\mathcal{M})^2$
is a function in  $C^{k}(U)$ (cf.~\cite{Foote}) 
and the Hessian matrix  $\nb^2 \zeta(x)$ represents the orthogonal projection on the normal space to $\mathcal{M}$ at  a point $x$ (cf.~\cite[p.~704]{ambrosio1996}). 
 Of course these results
 no longer hold in the case $k=1$, i.e.~when the inverse function theorem is not applicable.

\subsection{Approximate normal graphs} \label{sec:NormalGraphs}

As the normal bundle of an embedded $W^{2,p}$-curve  in $\R^n$ is only of the class $W^{1,p}$, one cannot  directly apply
the standard methods
of \S\ref{sec:Tubenbd}. 
In particular, we need to overcome the loss of regularity on the level of the tangent  space
in order to write the solution of our equation locally as a graph over a fixed smooth curve.
This problem  can be resolved by  
regularising the tangent
using Friedrichs mollifiers
(whilst taking into consideration the size of the constructed tubular neighbourhood). We will call this smoothened tangent \emph{quasi-tangent}.

\begin{definition}
A function $\eta \in C^\infty(\R)$ is called a  \emph{mollifier} if it satisfies the conditions:
(i) $\eta \geq 0$ on $\R$, 
(ii) $\eta (x) = 0 $ for all   $|x| \geq 1$,
and (iii) $\int_\R \eta(x) dx  = 1$. 
The associated \emph{rescaled mollifier} is the function  
$\eta_\varepsilon (x) = \frac 1 \varepsilon \eta(\frac x \varepsilon)$ for any $\ep>0$.
\end{definition}

Now consider a curve $\gamma \in W^{2,p}(\mathbb R / \mathbb Z, \mathbb R^n)$ parametrised by arc-length. 
The \emph{mollification} of $\gamma$ is defined to be the function 
$$\gamma_\ep(x) = (\gamma \ast \eta_\varepsilon )(x) = \int_\R \gamma(x-y)   \eta_\varepsilon (y) dy ,$$
i.e.~the convolution of the given curve $\gamma$ and the rescaled mollifier $\eta_\varepsilon$. 

For the mollified curve $\gamma_\ep$  we  derive the following  well-known estimates. 
Firstly, from the mean value theorem and the Sobolev embeddings, we find that 
\begin{align}
| \gamma_\varepsilon(x) - \gamma(x) | 
& = \left| \int_{\mathbb R} (\gamma(x-y) - \gamma(x) )  \eta_\varepsilon (y)  dy\right| \nonumber \\
& \leq \varepsilon \|\gamma'\|_{L^\infty} \nonumber \\
& \leq C  \varepsilon \|\gamma'\|_{W^{1,p}}  \label{eq:SmoothendCurveUniformEstimate} 
\end{align}
Likewise, we   find that 
\begin{align} 
| \gamma'_\varepsilon(x) - \gamma'(x) | 
& = \left| \int_{\mathbb R} \eta_\varepsilon (y) (\gamma'(x-y) - \gamma'(x) )  dy\right| \nonumber \\ 
& \leq \sqrt{\ep}  \|\gamma'\|_{C^{1/2}} \nonumber \\ 
& \leq C  \sqrt{\ep} \|\gamma'\|_{W^{1,p}}. \label{eq:SmoothendCurveUniformEstimateDerivative}
\end{align}
For higher derivatives we can use the  Sobolev embeddings, H\"older's inequality and integration by parts 
to obtain the $L^\infty$-bound
\begin{align} 
| \gamma^{(k+2)}_\varepsilon(x) | 
& =\left| \int_{\mathbb R}  \eta_{\varepsilon}^{(k)}(y) \gamma''(x-y) dy\right|  \nonumber \\ 
& \leq  \|\eta^{(k)}_{\varepsilon} \|_{L^q} \|\gamma''\|_{L^p}  \nonumber \\ 
&\leq C  \varepsilon^{-k-1+\frac 1 q} \|\gamma''\|_{L^p}   \nonumber \\ 
&= C  \varepsilon^{-k - \frac 1 p} \|\gamma''\|_{L^p} \label{eq:SmoothendCurveCurvatureEstimateHigherDerivaitves}
\end{align}
for integers $k\geq 0$ with $\frac 1p + \frac 1q = 1$.

We will use the next lemma to fix the smoothing parameter $\varepsilon.$

\begin{lemma} \label{lem:ApproximateTangent}
If for an $M> 0$ we have a curve $\gamma \in W^{2,p}(\mathbb R / \mathbb Z, \R^n)$ parametrised by arc-length which satisfies
$
\|\gamma'\|_{W^{1,p}} \leq M  
$,
then there exists an $\ep =\ep(p,M) >0$ 
such that the  unit tangent
$
\tau = \frac { \gamma'_{\varepsilon}} {|\gamma'_{\varepsilon}|}
$
satisfies   
\begin{equation} \label{eq:ApproximateTangent}
 \|\tau -  \gamma' \|_{L^\infty} \leq \frac 1 4
.\end{equation}
\end{lemma}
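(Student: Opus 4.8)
The plan is to reduce everything to the mollification estimate \eqref{eq:SmoothendCurveUniformEstimateDerivative} that has already been recorded. Since $\gamma$ is parametrised by arc-length we have $|\gamma'|\equiv 1$, and \eqref{eq:SmoothendCurveUniformEstimateDerivative} together with the hypothesis $\|\gamma'\|_{W^{1,p}}\le M$ yields
\[
\|\gamma'_\varepsilon-\gamma'\|_{L^\infty}\le C\sqrt{\varepsilon}\,M,
\]
where the constant $C=C(p)$ arises from the Sobolev embedding $W^{1,p}(\R/\Z)\hookrightarrow C^{1/2}(\R/\Z)$, which is available precisely because $p>2$ forces $1-\tfrac1p>\tfrac12$. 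First I would use this to keep the denominator uniformly away from zero: by the reverse triangle inequality $\big|\,|\gamma'_\varepsilon(x)|-1\,\big|\le C\sqrt{\varepsilon}\,M$ for every $x$, so once $\varepsilon$ is chosen with $C\sqrt{\varepsilon}\,M\le\tfrac12$ one has $|\gamma'_\varepsilon(x)|\ge\tfrac12$ everywhere, and in particular $\tau=\gamma'_\varepsilon/|\gamma'_\varepsilon|$ is a well-defined smooth unit vector field along $\tld\gamma$.

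Next I would estimate the difference directly. Writing
\[
\tau-\gamma'=\frac{\gamma'_\varepsilon}{|\gamma'_\varepsilon|}-\gamma'
=\frac{1}{|\gamma'_\varepsilon|}\Big(\gamma'_\varepsilon-\gamma'+\big(1-|\gamma'_\varepsilon|\big)\gamma'\Big),
\]
and using $\big|1-|\gamma'_\varepsilon|\big|=\big|\,|\gamma'|-|\gamma'_\varepsilon|\,\big|\le|\gamma'-\gamma'_\varepsilon|$ together with $|\gamma'|=1$, one obtains the pointwise bound
\[
|\tau(x)-\gamma'(x)|\le\frac{2\,|\gamma'_\varepsilon(x)-\gamma'(x)|}{|\gamma'_\varepsilon(x)|}\le 4C\sqrt{\varepsilon}\,M
\]
as soon as $|\gamma'_\varepsilon|\ge\tfrac12$. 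It then remains to fix $\varepsilon=\varepsilon(p,M)>0$ so small that $4C\sqrt{\varepsilon}\,M\le\tfrac14$ — for instance any $\varepsilon\le(16CM)^{-2}$ works, and this choice also subsumes the earlier requirement $C\sqrt{\varepsilon}\,M\le\tfrac12$. Taking the supremum over $x\in\R/\Z$ gives \eqref{eq:ApproximateTangent}.

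I do not expect a genuine obstacle here; the argument is essentially bookkeeping. The two points that require a little care are, first, verifying that the hidden constant $C$ depends only on $p$ (which is why one invokes the H\"older--Sobolev embedding $W^{1,p}\hookrightarrow C^{1/2}$, valid only for $p>2$, rather than a generic mollifier estimate), and second, securing the uniform lower bound on $|\gamma'_\varepsilon|$ before dividing, so that $\tau$ is globally and smoothly defined on all of $\R/\Z$.
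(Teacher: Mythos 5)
Your argument is correct and follows essentially the same route as the paper: both rest on the mollification estimate \eqref{eq:SmoothendCurveUniformEstimateDerivative}, and your explicit decomposition of $\tau-\gamma'$ simply unpacks the paper's appeal to the local Lipschitz continuity of the retraction $x\mapsto x/|x|$ near the unit sphere. The only cosmetic slip is calling $\tau$ a vector field along $\tld\gamma$ rather than along $\gamma$ (the reference curve $\tld\gamma$ has not yet been introduced at this point).
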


\begin{proof}
Using \eqref{eq:SmoothendCurveUniformEstimateDerivative} 
we get 
$
 \|\gamma_{\varepsilon}' - \gamma' \|_{L^\infty} \leq C \sqrt \ep \|\gamma'\|_{W^{1,p}}.
$								
As the retraction map $\Pi\colon x \mapsto \frac x {|x|}$ is locally Lipschitz on $\R^n\backslash \{0\}$, 
the    tangent $\tau = \Pi (\gamma'_{\varepsilon}) =  \frac { \gamma'_{\varepsilon}}  {|\gamma'_{\varepsilon}|}$  to the mollified curve $\gamma_{\ep}$ 
  satisfies 
\eqref{eq:ApproximateTangent} for some     $\ep>0$ 
sufficiently small. 		
\end{proof}

\begin{corollary} \label{cor:DerivativeAprTangent}
If for an $M> 0$ we have a curve $\gamma \in W^{2,p}(\mathbb R / \mathbb Z, \R^n)$ parametrised by arc-length which satisfies
$
\|\gamma'\|_{W^{1,p}} \leq M  
$,
then for an $\ep=\ep(p,M)>0$ as in Lemma \ref{lem:ApproximateTangent} 
the mollified curve $\gamma_\ep$
 has a
  unit tangent map
$\tau    \colon \mathbb R / \mathbb Z \to S^{n-1}  $  
that 
is smooth 
and    satisfies
\begin{equation} \label{eq:EstApproximateTangent}
 \|\tau'\|_{L^\infty} , \|\tau''\|_{L^\infty} \leq C 
\end{equation}
for a constant   $C=C(p,M)>0$. 
\end{corollary}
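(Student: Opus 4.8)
The plan is to exploit that, once the smoothing parameter $\ep=\ep(p,M)$ has been fixed as in Lemma~\ref{lem:ApproximateTangent}, the map $\gamma_\ep'$ is a \emph{single} smooth $\R^n$-valued function whose $L^\infty$-norm and those of all its derivatives are controlled by constants depending only on $p$ and $M$, and whose modulus is bounded away from $0$. The corollary then reduces to differentiating $\tau=\gamma_\ep'/|\gamma_\ep'|$ twice and estimating term by term.

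First I would record the two-sided bound on $|\gamma_\ep'|$. Since $\gamma$ is parametrised by arc-length, $|\gamma'|\equiv 1$, and convolution with the probability density $\eta_\ep$ gives $\|\gamma_\ep'\|_{L^\infty}\le\|\gamma'\|_{L^\infty}=1$. By \eqref{eq:SmoothendCurveUniformEstimateDerivative} and the hypothesis $\|\gamma'\|_{W^{1,p}}\le M$ we have $\|\gamma_\ep'-\gamma'\|_{L^\infty}\le C\sqrt{\ep}\,M$, and shrinking $\ep$ further if necessary (still keeping $\ep=\ep(p,M)$, which is legitimate because the conclusion of Lemma~\ref{lem:ApproximateTangent} is stable under decreasing $\ep$) we may assume $C\sqrt{\ep}\,M\le\tfrac14$, hence $|\gamma_\ep'|\ge|\gamma'|-\tfrac14=\tfrac34$ pointwise. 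Consequently $\tau=\gamma_\ep'/|\gamma_\ep'|$ is well defined, takes values in $S^{n-1}$, and is smooth, being the composition of the $C^\infty$ mollification $\gamma_\ep$ with the retraction $x\mapsto x/|x|$, which is $C^\infty$ on the open set $\{\,|x|>\tfrac12\,\}$ that contains the image of $\gamma_\ep'$.

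Next I would differentiate. A direct computation gives
\[
 \tau' \;=\; \frac{\gamma_\ep''}{|\gamma_\ep'|} \;-\; \frac{\langle \gamma_\ep', \gamma_\ep''\rangle}{|\gamma_\ep'|^{3}}\,\gamma_\ep' ,
\]
so $|\tau'|\le 2|\gamma_\ep''|/|\gamma_\ep'|\le\tfrac83|\gamma_\ep''|$. Applying \eqref{eq:SmoothendCurveCurvatureEstimateHigherDerivaitves} with $k=0$ and the now-fixed value of $\ep$ yields $\|\gamma_\ep''\|_{L^\infty}\le C\,\ep^{-1/p}\|\gamma''\|_{L^p}\le C(p,M)$, whence $\|\tau'\|_{L^\infty}\le C(p,M)$. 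Differentiating once more, $\tau''$ is a finite sum of terms each of the form (a product of entries of $\gamma_\ep'$, $\gamma_\ep''$, $\gamma_\ep'''$ and of inner products of these) divided by a power of $|\gamma_\ep'|$ that is at most $|\gamma_\ep'|^{-5}$; every numerator is bounded in $L^\infty$ using $\|\gamma_\ep'\|_{L^\infty}\le1$ together with \eqref{eq:SmoothendCurveCurvatureEstimateHigherDerivaitves} for $k=0,1$, and every denominator is $\ge(\tfrac34)^5$, so $\|\tau''\|_{L^\infty}\le C(p,M)$ as well. (In fact the same argument bounds every derivative $\tau^{(j)}$ by a constant $C(p,M,j)$.)

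There is no genuine obstacle here; the proof is entirely bookkeeping. The single point that must be handled with care is that the constant in \eqref{eq:SmoothendCurveCurvatureEstimateHigherDerivaitves} degenerates like $\ep^{-k-1/p}$ as $\ep\to0$, so it is essential that $\ep$ has already been pinned down as a function of $p$ and $M$ alone before these bounds are invoked — once that is done, every constant above is of the form $C(p,M)$, as claimed.
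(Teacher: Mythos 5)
Your proof is correct and follows exactly the route the paper intends (the paper omits the proof of this corollary, but the estimates \eqref{eq:SmoothendCurveUniformEstimateDerivative} and \eqref{eq:SmoothendCurveCurvatureEstimateHigherDerivaitves} are set up precisely for this computation): a lower bound on $|\gamma_\ep'|$ from the mollification error, smoothness of $\tau$ as a composition with the retraction away from the origin, and termwise estimation of $\tau'$, $\tau''$ using the $\ep^{-k-1/p}$ bounds with $\ep=\ep(p,M)$ already fixed. Your closing caveat — that the constants only close up because $\ep$ has been pinned down as a function of $p$ and $M$ beforehand — is exactly the right point to flag.
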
 

\begin{definition} \label{def:quasitangent}
We say $\tau$ is
\emph{unit quasi-tangent} to the $W^{2,p}$-curve $\gamma$
if it is the unit tangent to the mollified curve $\gamma_\ep$ for some $\ep=\ep(p,M)>0$ as in Lemma \ref{lem:ApproximateTangent}. 
\end{definition}

\begin{definition}
We denote by 
$P^T _v w = \langle w, \frac v {|v|}  \rangle \frac v {|v|}$
the orthogonal projection of $w$ onto the line $\mathbb R v$
for any  vectors $v,w \in \mathbb R^n$.  
Likewise,  we denote by
$P^\bot _v w = w - P^T _v w$
the orthogonal projection of $w$ onto the orthogonal complement $(\mathbb R v)^\bot$  of the line $\mathbb R v$.
\end{definition}

\begin{definition} \label{def:phiNormalSpace}
If $\tau$ is unit quasi-tangent to a $W^{2,p}$-curve $\gamma$,
we denote by 
$(W^{2,p})^T_{\tau}$  (resp.~$(W^{2,p})^\bot_{\tau}$)
the set of all $w \in W^{2,p}(\mathbb R / \mathbb Z, \R^n)$ such that 
$P_\tau^\bot w =0$ a.e. (resp.~$P_\tau^T w= 0$ a.e.). 
\end{definition}

We will now prove  the following statement that gives a lower bound on the thickness of the set of regular curves around $\gamma$.
 
 \begin{lemma} \label{lem:RegularCurve}
If for an $M> 0$ we have a curve $\gamma \in W^{2,p}(\mathbb R / \mathbb Z, \R^n)$ parametrised by arc-length which satisfies
$
\|\gamma'\|_{W^{1,p}} \leq M  
$,
then there exists  a constant $K=K(p,M)>0$  %
and a unit quasi-tangent $\tau$ to the curve  $\gamma$ 
 such that 
 	the curve $\gamma + \phi$ satisfies 
\[
 \inf_{x\in \mathbb R / \mathbb Z}\langle \gamma' + \phi', \tau \rangle \geq  \frac 1 2
\]
and hence
\[
\inf_{x\in \mathbb R / \mathbb Z} |\gamma'(x) + \phi'(x)| \geq \frac 1 2
\]
for each $\phi \in (W^{2,p})^\bot_{\tau}$ with 
 $
 	 \|\phi\|_{L^\infty} \leq K 
 $.
In particular, $\gamma+\phi$ is a regular curve.
 \end{lemma}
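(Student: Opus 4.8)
The plan is to estimate $\langle \gamma' + \phi', \tau\rangle$ from below by combining three facts: that $\gamma$ is parametrised by arc-length, that $\tau$ is a good approximation to $\gamma'$ by Lemma~\ref{lem:ApproximateTangent}, and that $\phi$ is orthogonal to $\tau$ in a pointwise sense so that $\phi'$ is controlled by $\tau'$. First I would write
\[
\langle \gamma' + \phi', \tau\rangle = \langle \gamma', \tau\rangle + \langle \phi', \tau\rangle
\]
and treat the two terms separately. For the first term, since $|\gamma'| = 1$ a.e.\ and $|\tau| = 1$, one has $\langle \gamma', \tau\rangle = 1 - \tfrac12 |\gamma' - \tau|^2 \geq 1 - \tfrac12 \cdot \tfrac1{16} = \tfrac{31}{32}$ by \eqref{eq:ApproximateTangent}. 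So the whole difficulty is to show that $\langle \phi', \tau\rangle$ is small, at least $\geq -\tfrac{15}{32}$, when $\|\phi\|_{L^\infty}$ is small enough.

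The key observation for the second term is that $\phi \in (W^{2,p})^\bot_\tau$ means $\langle \phi, \tau\rangle = 0$ pointwise a.e. Differentiating this identity (which is legitimate since $\tau$ is smooth by Corollary~\ref{cor:DerivativeAprTangent} and $\phi \in W^{2,p} \hookrightarrow C^1$), we get
\[
\langle \phi', \tau\rangle = -\langle \phi, \tau'\rangle ,
\]
so that $|\langle \phi', \tau\rangle| \leq \|\phi\|_{L^\infty} \|\tau'\|_{L^\infty} \leq C(p,M)\, \|\phi\|_{L^\infty}$ by the bound \eqref{eq:EstApproximateTangent}. Hence choosing $K = K(p,M)$ so small that $C(p,M)\, K \leq \tfrac14$ (for concreteness, so that $|\langle \phi', \tau\rangle| \leq \tfrac14$ whenever $\|\phi\|_{L^\infty} \leq K$) gives
\[
\inf_{x\in\R/\Z} \langle \gamma'+\phi', \tau\rangle \geq \tfrac{31}{32} - \tfrac14 \geq \tfrac12 .
\]

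From this the second inequality is immediate: by Cauchy--Schwarz, $|\gamma'(x)+\phi'(x)| \geq \langle \gamma'(x)+\phi'(x), \tau(x)\rangle \geq \tfrac12$ since $|\tau(x)| = 1$. In particular $\gamma'+\phi'$ never vanishes, so $\gamma+\phi$ is a regular curve; this is what is claimed. I do not expect any serious obstacle here — the only point requiring a little care is justifying the differentiation of the orthogonality relation $\langle\phi,\tau\rangle=0$, which is where the smoothness of the quasi-tangent $\tau$ (as opposed to merely the $W^{1,p}$-regularity of the genuine tangent $\gamma'$) is essential, and this is precisely the reason the quasi-tangent construction was introduced. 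One should also record the sharper estimate $\inf_x\langle\gamma'+\phi',\tau\rangle \geq \tfrac12$ explicitly, as the lemma statement wants both inequalities; the numerology above (using $\tfrac14$ in \eqref{eq:ApproximateTangent} and a possibly smaller $K$) leaves ample room.
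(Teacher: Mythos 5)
Your proposal is correct and follows essentially the same argument as the paper: differentiate the pointwise orthogonality $\langle\phi,\tau\rangle=0$ to get $\langle\phi',\tau\rangle=-\langle\phi,\tau'\rangle$, bound this by $C(p,M)\|\phi\|_{L^\infty}$ via Corollary~\ref{cor:DerivativeAprTangent}, and combine with the lower bound on $\langle\gamma',\tau\rangle$ from Lemma~\ref{lem:ApproximateTangent} and Cauchy--Schwarz. The only cosmetic difference is that you use the identity $\langle\gamma',\tau\rangle=1-\tfrac12|\gamma'-\tau|^2$ where the paper uses $\langle\gamma',\tau\rangle\geq 1-|\tau-\gamma'|\geq\tfrac34$, which changes nothing of substance.
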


 \begin{proof}
We first note that
$
\langle \gamma', \tau \rangle = |\gamma'|^2 + \langle \gamma', \tau - \gamma'\rangle
\geq 1 - |\tau - \gamma'| \geq \frac 3 4
$
by   
Lemma~\ref{lem:ApproximateTangent}
and the fact that  $|\gamma'|=1$. 
Upon differentiating the orthogonality condition $\langle \phi , \tau \rangle = 0 $, we get $\langle \phi', \tau \rangle = - \langle \phi, \tau' \rangle$.
In which case the estimate \eqref{eq:EstApproximateTangent} implies that 
 $
  |\langle \phi', \tau\rangle | = |\langle \phi, \tau'\rangle| 
  \leq C %
  \| \phi \|_{L^\infty } \leq \frac 1 4
 $
 whenever 
 $\|\phi\|_{L^\infty} \leq  
 \frac {1}{4 C } %
 = K$. 
 Thus %
 $$
 \langle \gamma' + \phi', \tau \rangle \geq \frac 3 4 - \frac 1 4 = \frac 1 2
 $$ whenever $\|\phi\|_{L^\infty} \leq K $, i.e.~$\gamma + \phi$ is a regular curve.
 As $\tau$ is of unit length, we also have
 $
 |\gamma' + \phi'| \geq \frac 1 2
 $ on $ \mathbb R / \mathbb Z$.
 \end{proof}

We will now deduce the following lower bound for the $L^p$-norm of the curvature of a curve  $\tld \gamma+\phi$ 
in terms of the
$L^p$-norm of the second derivative of $\phi$. 
This bound extends to our situation 
the well know analogous result for the case of a real normal graph over a smooth curve.

\begin{lemma} \label{lem:EstSecDerivative}
If for an $M> 0$ we have a curve $\gamma \in W^{2,p}(\mathbb R / \mathbb Z, \R^n)$ parametrised by arc-length which satisfies
$
\|\gamma'\|_{W^{1,p}} \leq M  
$,
then for the constant $K=K(p,M)>0$ from Lemma \ref{lem:RegularCurve}  %
and a unit quasi-tangent $\tau$ to the curve  $\gamma$ 
 such that for each $\phi \in (W^{2,p})^\bot_{\tau}$ with 
 $
 	 \|\phi\|_{L^\infty} \leq K 
 $
$$
| v| \leq C |P_{ \gamma'+ \phi'}^{\bot} v|
$$
for all $v\in \R^n$  pointing in an approximate normal direction and 
 $$
  \int_{\R/\Z} | \phi''|^p ds  \leq C 
 \bigg( 1+  \int_{\R/\Z} |\kappa_{ \gamma + \phi}|^p ds   \bigg)
 $$
for some $C=C(M,p)$. 
\end{lemma}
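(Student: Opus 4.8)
The plan is to carry everything out for the curve $c:=\gamma+\phi$, using $c'=\gamma'+\phi'$, $c''=\gamma''+\phi''$ and the identity $\kappa_{\gamma+\phi}=|c'|^{-2}P^\bot_{c'}c''$ for its curvature vector, and to invoke only three inputs: the pointwise lower bounds $\langle c',\tau\rangle\ge\tfrac14$ (in fact $\ge\tfrac12$) and $|c'|\ge\tfrac12$ from Lemma~\ref{lem:RegularCurve}, the smoothness bounds $\|\tau'\|_{L^\infty},\|\tau''\|_{L^\infty}\le C(p,M)$ from Corollary~\ref{cor:DerivativeAprTangent}, and $\|\phi\|_{L^\infty}\le K$. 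Here ``$v$ points in an approximate normal direction'' is read as $P^T_\tau v=0$, i.e.~$v\perp\tau$.

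For the first inequality I would start from $|P^\bot_{c'}v|^2=|v|^2-|c'|^{-2}\langle v,c'\rangle^2$; since $v\perp\tau$, the pairing $\langle v,c'\rangle$ only sees the $\tau$-orthogonal component $P^\bot_\tau c'$ of $c'$, and plugging in $|c'|^2=\langle c',\tau\rangle^2+|P^\bot_\tau c'|^2\ge\tfrac14+|P^\bot_\tau c'|^2$ gives $|P^\bot_{c'}v|^2\ge|v|^2\bigl(1+4|P^\bot_\tau c'|^2\bigr)^{-1}$. Writing $P^\bot_\tau c'=P^\bot_\tau(\gamma'-\tau)+P^\bot_\tau\phi'$ with $|P^\bot_\tau(\gamma'-\tau)|\le\tfrac14$ from Lemma~\ref{lem:ApproximateTangent}, this reduces the first claim to an $L^\infty$-bound on $\phi'$.

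For the second inequality I would decompose $\phi''=\langle\phi'',\tau\rangle\tau+P^\bot_\tau\phi''$. Differentiating the constraint $\langle\phi,\tau\rangle=0$ twice yields $\langle\phi'',\tau\rangle=-2\langle\phi',\tau'\rangle-\langle\phi,\tau''\rangle$, so $|\langle\phi'',\tau\rangle|\le C(|\phi'|+K)$ pointwise by Corollary~\ref{cor:DerivativeAprTangent}. For $P^\bot_\tau\phi''$ (which is $\perp\tau$) I would apply the first inequality and use $P^\bot_{c'}(P^\bot_\tau\phi'')=P^\bot_{c'}\phi''-\langle\phi'',\tau\rangle P^\bot_{c'}\tau$ together with $P^\bot_{c'}\phi''=P^\bot_{c'}c''-P^\bot_{c'}\gamma''=|c'|^2\kappa_{\gamma+\phi}-P^\bot_{c'}\gamma''$ and the fact that orthogonal projections are contractions; this gives the pointwise estimate $|\phi''|\le C\,|c'|^2|\kappa_{\gamma+\phi}|+C|\gamma''|+C|\phi'|+CK$. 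Raising this to the $p$-th power and integrating over $\R/\Z$, the $\gamma''$-term is $\le CM^p$, the $\phi'$-term is absorbed into the left-hand side by the interpolation inequality $\|\phi'\|_{L^p}\le\epsilon\|\phi''\|_{L^p}+C_\epsilon\|\phi\|_{L^p}$ and $\|\phi\|_{L^p}\le K$, and there remains $\int_{\R/\Z}|c'|^{2p}|\kappa_{\gamma+\phi}|^p\,ds$.

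The step I expect to be the main obstacle is controlling this last term, and correspondingly making the first inequality uniform: both need $\|c'\|_{L^\infty}=\|\gamma'+\phi'\|_{L^\infty}\le C(p,M)$, which does not follow from $\|\phi\|_{L^\infty}\le K$ on its own. The route I would pursue is to take $K=K(p,M)$ small and to combine the estimate above with the Gagliardo--Nirenberg inequality $\|\phi'\|_{L^\infty}\le C\|\phi''\|_{L^p}^{(p+1)/2p}\|\phi\|_{L^p}^{(p-1)/2p}+C\|\phi\|_{L^p}$, whose leading exponent is $<1$ for $p>2$, so that the resulting inequality in $\|\phi''\|_{L^p}$ can be closed by an absorption (or continuity) argument, using if necessary the a priori bound on $\|\kappa_{\gamma+\phi}\|_{L^p}$ that the energy provides in the applications. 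Once $\|c'\|_{L^\infty}\le C(p,M)$ is secured, $\int|c'|^{2p}|\kappa_{\gamma+\phi}|^p\,ds\le C(p,M)\int|\kappa_{\gamma+\phi}|^p\,ds$ and both parts of the lemma follow.
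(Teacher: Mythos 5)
Your core argument is the paper's argument: you use the same transversality estimate (the angle between $\gamma'+\phi'$ and $\tau$ bounded away from $\pi/2$ via $\langle\gamma'+\phi',\tau\rangle\ge\tfrac12$), the same curvature formula $\kappa_{\gamma+\phi}=|c'|^{-2}P^\bot_{c'}c''$ to control $P^\bot_{c'}\phi''$, and the same double differentiation of $\langle\phi,\tau\rangle=0$ to control $P^T_\tau\phi''$, recombined exactly as in the paper. The one point where you diverge is the one you flag yourself: the need for an $L^\infty$-bound on $\phi'$. You are right that this does not follow from $\|\phi\|_{L^\infty}\le K$; the paper's own proof quietly invokes a bound $|\phi'|\le\Lambda$ that is absent from the lemma's hypotheses and is in fact supplied externally, by the admissible class $\mathscr{V}=\{\phi:\|\phi\|_{L^\infty}<3\mu,\ \|\phi'\|_{L^\infty}<3W\}$ in which the lemma is later applied. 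Your attempt to \emph{derive} this bound via Gagliardo--Nirenberg and absorption, however, does not close as stated: with $\theta=\tfrac{p+1}{2p}$ the term $\int|c'|^{2p}|\kappa|^p\,ds$ produces $\|\phi''\|_{L^p}^{2p\theta}=\|\phi''\|_{L^p}^{p+1}$ against $\|\phi''\|_{L^p}^{p}$ on the left (and the situation worsens once you track that the transversality constant itself grows with $\|\phi'\|_{L^\infty}$), so Young's inequality cannot absorb it; one would need a genuine smallness/continuity argument with $K$ depending on the curvature bound. The clean fix is simply to add $\|\phi'\|_{L^\infty}\le\Lambda$ to the hypotheses (letting $C$ depend on $\Lambda$ as well), which is what the paper implicitly does; with that amendment your proof is complete and coincides with the paper's.
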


\begin{proof}
Since
$
\langle  \gamma' + \phi', \tau \rangle \geq \frac 12
$
from Lemma \ref{lem:RegularCurve}
and 
$| \gamma ' + \phi'| \leq | \gamma'|+|\phi'| \leq 1+\Lambda$,
we see that 
$
 \langle \frac { \gamma' +  \phi'}{| \gamma' + \phi'|} , \tau \rangle
 \geq \frac 12 \frac 1 {1+\Lambda} .
$
Hence the angle between $ \gamma' + \phi'$ and $\tau$ is bounded strictly away from $\frac \pi 2$.
In which case we have
$$
|v| \leq C |P_{ \gamma'+ \phi'}^{\bot} v|
$$
for all $v \in \mathbb R^n$ pointing in an approximate normal direction.

For the second estimate, we recall the curvature formula given by 
\[
\kappa_{ \gamma+ \phi} = \frac {P^\bot_{ \gamma' + \phi'} ( \gamma'' + \phi '')}{| \gamma' + \phi'|^2}
.\]
Now by the triangle inequality we see that 
\begin{align*}
\left| P^\bot_{ \gamma' + \phi'} ( \phi '' ) \right|
\leq \left| P^\bot_{ \gamma' + \phi'} ( \gamma''+ \phi '' ) \right|
+ \left| P^\bot_{ \gamma' + \phi'} ( \gamma''  ) \right| 
\leq C (|\kappa_{ \gamma+ \phi}| + |\gamma ''|) ,
\end{align*}
since $| \gamma' + \phi'|\leq | \gamma'| + |\phi'|\leq 1 +\Lambda$.
To control the tangential part  $P^T_\tau \phi''$, we  differentiate  the equation 
$\langle \phi, \tau \rangle=0$ twice to get
$
 \langle \phi '' , \tau \rangle = - 2 \langle \phi', \tau ' \rangle - \langle \phi, \tau '' \rangle.
$
It then follows that  
\begin{align*}
|P^T_{\tau}  \phi ''| = | \langle \phi '' , \tau \rangle|
\leq  | \langle \phi, \tau '' \rangle| +  2 |\langle \phi', \tau ' \rangle|
\leq C (K+ \Lambda) ,
\end{align*}
since both $\tau'$ and $\tau''$ are bounded by Corollary \ref{cor:DerivativeAprTangent}.
In combining both the tangential and normal parts of $\phi''$ and using the fact that the angle between $ \gamma' + \phi'$ and $\tau$ is bounded strictly away from $\tfrac \pi 2$, we find  that 
\[
|\phi''| \leq C( |P^\bot _{\gamma' + \phi'}  \phi ''| + |P^T_{\tau}  \phi ''| )
\leq C (1+  | \gamma'' | + |\kappa_{ \gamma+ \phi}|)
\]
from which the desired integral estimate follows (since
$\|  \gamma''\|_{L^p} \leq M$ by Lemma~\ref{lem:approxNormalGraph}.
\end{proof}

 Next we show that there exists a good substitute for the nearest neighbourhood projection which yields a local tubular neighbourhood. %
 We also obtain a lower bound on  thickness of the tubular neighbourhood  that   only depends   on the $W^{2,p}$-norm of the curve. 
 
 \begin{definition} 
If $\tau$ is a unit quasi-tangent to a $W^{2,p}$-curve $\gamma$,
the $(n-1)$-dimensional subspace 
$$\mathcal{N}_{x_0}=\{v \in \mathbb R^n: P^T_{\tau(x_0)} v =0\}$$
 is called an  \emph{approximate normal space} 
to $\gamma$  %
at a given fixed  point  $x_0 \in \mathbb R / \mathbb Z$. 
 \end{definition}

By considering the  map
$  H_{x_0} \colon  B_{\delta}(x_0) \times \mathcal{N}_{x_0} \rightarrow \mathbb R^n$
 given by 
\begin{equation}\label{eqn:Hmap}
(x,v) \mapsto \gamma(x) + P^\bot_{\tau(x)} v
\end{equation} 
for some $0<\delta<1$, we obtain the following:
 
 \begin{lemma} \label{lem:Tubularneighbourhood}
If for an $M> 0$ we have a curve $\gamma \in W^{2,p}(\mathbb R / \mathbb Z, \R^n)$ parametrised by arc-length which satisfies
$
\|\gamma'\|_{W^{1,p}} \leq M  
$,
  then there exists a  sufficiently small constant $\delta = \delta(p,M)   >0$ %
and a unit quasi-tangent $\tau$ to the curve $\gamma$  
  such that     \eqref{eqn:Hmap}
    maps $B_\delta (x_0) \times B_\delta (0)$ diffeomorphically onto its image and 
  \begin{equation}\label{eqn:Hballs}
    B_{  \delta/ 4} \big(\gamma(B_{  \delta / 4}(x_0))\big) 
    \subset H_{x_0} \big(B_\delta (x_0) \times B_\delta (0)\big)
   .\end{equation} 
 \end{lemma}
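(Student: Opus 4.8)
The plan is to treat $H_{x_0}$ as a small perturbation of the exact tubular-neighbourhood map of a \emph{smooth} curve, namely the mollification $\gamma_\ep$, and then to apply the inverse function theorem with \emph{quantitative} control on the size of the neighbourhood on which it is invertible. Concretely, I would first compute the differential of $H_{x_0}$ at a point $(x,v)\in B_\delta(x_0)\times B_\delta(0)$. Differentiating \eqref{eqn:Hmap} in the $x$-direction gives $\p_x H_{x_0}=\gamma'(x)+(P^\bot_{\tau(x)})'v$, and since $\|(P^\bot_{\tau})'\|_{L^\infty}\le C\|\tau'\|_{L^\infty}\le C(p,M)$ by Corollary~\ref{cor:DerivativeAprTangent}, for $|v|<\delta$ small this is within $C\delta$ of $\gamma'(x)$, which is a unit vector close to $\tau(x)$ by Lemma~\ref{lem:ApproximateTangent}. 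The differential in the $v$-direction is simply $w\mapsto P^\bot_{\tau(x)}w$, an isometry from $\mathcal N_{x_0}$ onto $\mathcal N_x$ when $x$ is close to $x_0$ (again because $\|\tau'\|_{L^\infty}\le C$ so the spaces $\mathcal N_x$ and $\mathcal N_{x_0}$ are within angle $C\delta$ of each other). Hence $DH_{x_0}(x,v)$ is, for $\delta=\delta(p,M)$ sufficiently small, within $C\delta$ in operator norm of the block map $(\dot x,w)\mapsto \dot x\,\tau(x_0)+w$, which is invertible with inverse of norm bounded by a constant depending only on $p,M$. Choosing $\delta$ small makes $DH_{x_0}$ uniformly invertible on the whole cylinder $B_\delta(x_0)\times B_\delta(0)$.

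From uniform invertibility of the differential plus a quantitative injectivity argument (e.g.\ the standard Lipschitz-perturbation-of-the-identity estimate: write $H_{x_0}=L+R$ with $L$ linear invertible and $R$ having small Lipschitz constant on the cylinder, so $H_{x_0}$ is a bi-Lipschitz homeomorphism onto its image) one gets that $H_{x_0}$ maps $B_\delta(x_0)\times B_\delta(0)$ diffeomorphically onto its image, with bi-Lipschitz constants depending only on $p,M$. This is the content of the first assertion. The same quantitative bi-Lipschitz bounds will be exactly what is needed for the inclusion \eqref{eqn:Hballs}: one shows that the image $H_{x_0}(B_\delta(x_0)\times B_\delta(0))$ contains a definite-size neighbourhood of the ``core'' $\gamma(B_{\delta/4}(x_0))=H_{x_0}(B_{\delta/4}(x_0)\times\{0\})$. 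For this I would take an arbitrary $y$ with $\dist\big(y,\gamma(B_{\delta/4}(x_0))\big)<\delta/4$, pick $x_1\in B_{\delta/4}(x_0)$ with $|y-\gamma(x_1)|<\delta/4$, and solve $H_{x_0}(x,v)=y$ by a contraction-mapping / Newton iteration started at $(x_1,0)$; the smallness of $\|\tau'\|_{L^\infty}$ guarantees the iteration stays inside $B_\delta(x_0)\times B_\delta(0)$ and converges, because the ``tangential drift'' of the fibres $P^\bot_{\tau(x)}\mathcal N_{x_0}$ as $x$ varies is $O(\delta)$. Alternatively, and perhaps more cleanly, invert the relation: $y=\gamma(x)+P^\bot_{\tau(x)}v$ forces $|x-x_1|\le C|y-\gamma(x_1)|+C|v|$ and $|v|\le C|P^\bot_{\tau(x)}(y-\gamma(x))|\le C|y-\gamma(x_1)|+C|x-x_1|$, and a bootstrap closes with all quantities $<\delta$ provided the initial distance is $<\delta/4$ and $\delta$ is small depending on $p,M$.

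The main obstacle is genuinely quantitative rather than conceptual: one must track that \emph{every} constant arising — the operator norm of $DH_{x_0}^{-1}$, the Lipschitz constant of the remainder $R$, the radius on which the Newton iteration converges — depends only on $p$ and $M$ (through $\|\gamma'\|_{W^{1,p}}\le M$ and hence, via Corollary~\ref{cor:DerivativeAprTangent}, through the uniform bounds $\|\tau'\|_{L^\infty},\|\tau''\|_{L^\infty}\le C(p,M)$), and in particular \emph{not} on the $W^{2,p}$-norm of $\gamma$ in any way that would degenerate, nor on the base point $x_0$. The key point making this possible is that $H_{x_0}$ uses the \emph{smooth} quasi-tangent $\tau$ (with $C(p,M)$-bounds on its first two derivatives) in place of the merely $W^{1,p}$ exact tangent $\gamma'$; the curve $\gamma$ itself enters $H_{x_0}$ only through the $C^{1}$-quantity $\gamma$ and its derivative $\gamma'$, which is uniformly bounded and $C^{1/2}$-Hölder with $(p,M)$-controlled norm by the Sobolev embedding $W^{1,p}\hookrightarrow C^{1/2}$. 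So the second-derivative degeneracy of $\gamma$ never appears in the estimates for $H_{x_0}$, and all bounds are uniform. A secondary (but routine) point is to confirm the $C^1$-regularity needed to call $H_{x_0}$ a diffeomorphism: $x\mapsto\tau(x)$ is smooth and $x\mapsto\gamma(x)$ is $C^1$, so $H_{x_0}\in C^1$, and its differential is invertible, which suffices.
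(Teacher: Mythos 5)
Your proposal is correct and follows essentially the same route as the paper: compute the two partial derivatives of $H_{x_0}$, use the uniform bounds $\|\tau'\|_{L^\infty},\|\tau''\|_{L^\infty}\le C(p,M)$ from Corollary~\ref{cor:DerivativeAprTangent} together with the H\"older continuity of $\gamma'$ (Sobolev embedding $W^{1,p}\hookrightarrow C^{1-1/p}$) to show that $DH_{x_0}$ stays uniformly close to a fixed invertible linear map on the whole cylinder, and conclude via the inverse function theorem plus the Lipschitz-perturbation bi-Lipschitz estimate. Two small remarks. First, your claim that $DH_{x_0}(x,v)$ is ``within $C\delta$'' of the block map $(\dot x,w)\mapsto\dot x\,\tau(x_0)+w$ is slightly too strong: the deviation contains the fixed term $|\gamma'(x_0)-\tau(x_0)|\le\tfrac14$ coming from Lemma~\ref{lem:ApproximateTangent}, which does not shrink with $\delta$; the paper accordingly proves only $\|DH_{x_0}-I\|\le\tfrac12$ (after normalising $\tau(x_0)=e_1$), which is all that the Neumann-series invertibility and the lower bound $|H_{x_0}(z_1)-H_{x_0}(z_2)|\ge\tfrac12|z_1-z_2|$ require. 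Second, for the inclusion \eqref{eqn:Hballs} the paper does not run a Newton/contraction iteration; it instead observes that the bi-Lipschitz lower bound forces $\dist\big(H_{x_0}(\partial(B_\delta(x_0)\times B_\delta(0))),\gamma(B_{\delta/2}(x_0))\big)\ge\delta/4$ and deduces the inclusion from this boundary separation (implicitly a degree or connectedness argument). Your constructive iteration, or your bootstrap on $|x-x_1|$ and $|v|$, derives the same conclusion from the same quantitative input and is equally valid; the paper's variant is shorter but leaves the final topological step implicit.
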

 
 \begin{proof}
 We first show that $H_{x_0}$ is a local diffeomorphism by way of  the inverse function theorem. To do so we calculate the partial derivatives %
\begin{align*}
\frac {\p H_{x_0}} {\p x}   & = \gamma'(x) - \langle v, \tau'(x) \rangle \tau(x) - \langle v, \tau(x) \rangle \tau'(x) \\
\frac {\p H_{x_0}} {\p v}      & = v   + P^\bot_{\tau (x)} v - P^\bot_{\tau (x_0)} v
.\end{align*}
Then from the estimates \eqref{eq:ApproximateTangent} and \eqref{eq:EstApproximateTangent} together with the Sobolev embedding  $W^{2,p}(B_\delta(x_0),\R^n) \hookrightarrow C^{1,1-\frac 1p} (B_\delta(x_0),\R^n)$
we find that 
 \begin{align*}
 \Big|  \frac{\p H_{x_0}} {\p x}  - \tau(x_0) \Big| 
  & \leq  | \gamma'(x_0) - \tau(x_0)| +  C|v| +|\gamma'(x) - \gamma'(x_0)|   
 \\ & \leq  \frac 1 4 + C |v|  + C  \delta ^{1-\frac 1 p}
\end{align*}
for some constant $C=C(p,M)>0$. %
By taking some $\delta  > 0$ sufficiently small (depending only on $p$ and $M$), we have 
$$
\Big |  \frac{\p H_{x_0}}{\p x}  - \tau(x_0) \Big| \leq \frac 12 
$$
for all $x\in B_\delta(x_0) \subset \R/\Z$ and   $v \in B_\delta(0)\subset \mathcal{N}_{x_0}$.
Likewise, whenever  $\delta  > 0$ is sufficiently small, 
we also have 
$$\Big | \frac{\p H_{x_0}}{\p v}   - v \Big| \leq \frac 1 2$$
for all $(x,v) \in B_\delta(x_0) \times  B_\delta(0)$.

 Let us now assume that $\tau(x_0) = e_1$ without loss of generality. 
From the above estimates 
we see that the Jacobi matrix $ D H_{x_0}$  satisfies
 \begin{equation} \label{eq:LocalDiffeo}
  \|D H_{x_0} - I\| \leq \frac 12 ,
 \end{equation}
 where $\|\cdot\|$ denotes the operator norm. Therefore $DH_{x_0}$ is invertible and 
 so $H_{x_0}$ maps $B_{\delta}(x_0) \times B_{\delta}(0)$ diffeomorphically onto its
image by the inverse function theorem.
 Moreover,    \eqref{eq:LocalDiffeo}  implies that 
 \begin{align*}
  |H_{x_0}(z_1)- H_{x_0}(z_2)| & = \left| \int_{0}^1  D H_{x_0} (z_2 + \theta (z_1 - z_2)) (z_1-z_2) d\theta \right| \\
  &\geq |z_1 - z_2 | - \tfrac 1 2 |z_1 - z_2|  \\
&  = \tfrac 1 2 |z_1 - z_2| .
 \end{align*}
In which case the map $H_{x_0}$ is    bi-Lipschitz and hence injective on $B_{\delta}(x_0) \times B_{\delta}(0)$. 
From the fact that 
  $
  \dist  ( \partial (B_{\delta} (x_0) \times B_{\delta} (0)),  B_{ \delta/ 2} (x_0) \times \{0\} ) \geq  \frac \delta  2 
  $
and the latter bi-Lipschitz estimate   
   we have 
   $$
  \dist \Big(
H_{x_0}\big(\partial (B_{\delta} (x_0) \times B_{\delta} (0))\big), 
 \gamma(B_{  \delta /2} (x_0))\Big)   
  \geq  \frac  \delta  4
 $$
which then gives  %
\eqref{eqn:Hballs}.
 \end{proof}

We can now use  Lemma \ref{lem:Tubularneighbourhood} to show that 
any $W^{2,p}$-curve $ \gamma$ can be written as an approximate normal graph over a given $W^{2,p}$-curve $\tld \gamma$
whenever the curves are     $C^1$-close to each other.

\begin{lemma} \label{lem:ApproxNormalGraphs}
If for an $M> 0$ we have a curve $\gamma \in W^{2,p}(\mathbb R / \mathbb Z, \R^n)$ parametrised by arc-length that satisfies
$
\|\gamma'\|_{W^{1,p}} \leq M  
$,
then there exists a sufficiently small constant $\rho=\rho(p,M) >0$ %
and a unit quasi-tangent $\tau $   to the curve $\gamma$ 
such that for each curve   $\tld \gamma \in W^{2,p}(\mathbb R / \mathbb Z, \R^n)$ satisfying $\| \gamma - \tld \gamma\|_{C^1} \leq \rho$
we have  
some $\phi \in (W^{2,p})^\bot_\tau$
and a reparametrisation $\sigma$ of $\R/ \mathbb Z$
for which 
$
\tld \gamma \circ \sigma = \gamma + \phi
.$
\end{lemma}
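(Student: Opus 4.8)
The plan is to construct $\sigma$ and $\phi$ from the local tubular-neighbourhood coordinates supplied by Lemma~\ref{lem:Tubularneighbourhood}. First I would fix the unit quasi-tangent $\tau$ and the radius $\delta=\delta(p,M)>0$ furnished by that lemma, and then choose $\rho=\rho(p,M)\in(0,\delta/8)$, imposing further smallness as needed below. Fix a basepoint $x_0\in\R/\Z$. For $y\in B_{\delta/8}(x_0)$ the hypothesis $\|\gamma-\tld\gamma\|_{C^1}\le\rho$ gives
\[
  |\tld\gamma(y)-\gamma(x_0)|\le|\tld\gamma(y)-\gamma(y)|+|\gamma(y)-\gamma(x_0)|\le\rho+\tfrac{\delta}{8}<\tfrac{\delta}{4},
\]
so the inclusion \eqref{eqn:Hballs} shows $\tld\gamma(y)\in H_{x_0}\big(B_\delta(x_0)\times B_\delta(0)\big)$. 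Since $H_{x_0}$ is a diffeomorphism onto its image and bi-Lipschitz with constant $2$ on $B_\delta(x_0)\times B_\delta(0)$ (see the proof of Lemma~\ref{lem:Tubularneighbourhood}, and note $H_{x_0}(y,0)=\gamma(y)$), there is a unique pair $\big(\xi_{x_0}(y),w_{x_0}(y)\big)\in B_\delta(x_0)\times B_\delta(0)$ with $\tld\gamma(y)=\gamma(\xi_{x_0}(y))+P^\bot_{\tau(\xi_{x_0}(y))}w_{x_0}(y)$, and the bi-Lipschitz bound gives $|\xi_{x_0}(y)-y|,|w_{x_0}(y)|\le2\rho$. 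Because $P^\bot_{\tau(\xi_{x_0}(y))}w_{x_0}(y)\in\mathcal N_{\xi_{x_0}(y)}$, the point $\xi_{x_0}(y)$ is characterised as the unique $x\in B_{2\rho}(y)$ at which $\tld\gamma(y)-\gamma(x)$ is orthogonal to $\tau(x)$ (any such $x$ produces, by a short linear-algebra computation using that $\tau$ varies slowly, a preimage $(x,v)\in B_\delta(x_0)\times B_\delta(0)$ of $\tld\gamma(y)$ under $H_{x_0}$, forcing $x=\xi_{x_0}(y)$ by injectivity).

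Next I would establish regularity, monotonicity and globalisation, working from the scalar characterisation $\langle\tld\gamma(y)-\gamma(\xi_{x_0}(y)),\tau(\xi_{x_0}(y))\rangle=0$. Differentiating this identity and solving for $\xi_{x_0}'$, one finds that the numerator is $\langle\tld\gamma'(y),\tau(\xi_{x_0}(y))\rangle$ and the denominator is $\langle\gamma'(\xi_{x_0}(y)),\tau(\xi_{x_0}(y))\rangle-\langle\tld\gamma(y)-\gamma(\xi_{x_0}(y)),\tau'(\xi_{x_0}(y))\rangle$; by Lemma~\ref{lem:ApproximateTangent}, \eqref{eq:EstApproximateTangent} and the $C^1$-closeness both lie between $\tfrac34-C\rho$ and $1+C\rho$, so $\xi_{x_0}'$ is continuous and bounded away from $0$ once $\rho$ is small. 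A second differentiation expresses $\xi_{x_0}''$ (and $w_{x_0}''$) through $\gamma''\in L^p$, the bounded derivatives $\tau',\tau''$ (Corollary~\ref{cor:DerivativeAprTangent}) and $\tld\gamma''\in L^p$, whence $\xi_{x_0}\in W^{2,p}$. The uniqueness from the first paragraph, applied in $B_{2\rho}(y)$, forces two such local maps to coincide on the overlap of their domains, so covering $\R/\Z$ by finitely many balls $B_{\delta/8}(x_i)$ patches the $\xi_{x_i}$ into a single map $\xi\colon\R/\Z\to\R/\Z$ of class $W^{2,p}$ with $\|\xi-\mathrm{id}\|_{L^\infty}\le2\rho$ and $\xi'>0$; its lift to $\R$ is a strictly increasing degree-one map, hence $\xi$ is an orientation-preserving $W^{2,p}$-diffeomorphism of the circle. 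Put $\sigma:=\xi^{-1}$, again a $W^{2,p}$-diffeomorphism of $\R/\Z$.

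Finally, set $\phi:=\tld\gamma\circ\sigma-\gamma$. For $x$ with $y:=\sigma(x)$ lying in some $B_{\delta/8}(x_i)$, so that $\xi(y)=x$, the first-paragraph identity gives $\phi(x)=\tld\gamma(y)-\gamma(\xi(y))\in\mathcal N_{\xi(y)}=\mathcal N_x$, i.e.~$\langle\phi(x),\tau(x)\rangle=0$ for every $x$; hence $P^T_\tau\phi=0$ a.e., and $\phi\in W^{2,p}$ since $\sigma$ is a $W^{2,p}$-diffeomorphism and $\tld\gamma\in W^{2,p}$, so $\phi\in(W^{2,p})^\bot_\tau$. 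Since $\tld\gamma\circ\sigma=\gamma+\phi$ holds by construction, this proves the lemma. The step I expect to be the main obstacle is the second paragraph: confirming that $\xi$ (hence $\sigma$ and $\phi$) retains full $W^{2,p}$-regularity even though the coordinate map $H_{x_0}$ has only H\"older-continuous derivatives in the $x$-variable — which is why one argues via the scalar orthogonality relation rather than by composing with $H_{x_0}^{-1}$ directly — and checking that the locally defined coordinates genuinely glue to a diffeomorphism of $\R/\Z$ rather than merely to a local diffeomorphism.
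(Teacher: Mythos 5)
Your argument is correct and follows essentially the same route as the paper: both define the reparametrisation by pulling $\tld \gamma$ back through the tubular-neighbourhood charts $H_{x_j}^{-1}$ of Lemma~\ref{lem:Tubularneighbourhood}, patch over a finite cover of $\R/\Z$, and invert to obtain $\tld\gamma\circ\sigma=\gamma+\phi$ with $\phi$ orthogonal to $\tau$. The paper's proof is terser --- it simply sets $\sigma=\pi\circ H_{x_j}^{-1}\circ\tld\gamma$ and appeals to the inverse function theorem --- whereas you additionally verify the gluing and the $W^{2,p}$-regularity of the reparametrisation via the scalar orthogonality relation, points the paper leaves implicit.
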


\begin{proof}
Firstly, choose $\delta >0$  as in Lemma \ref{lem:Tubularneighbourhood}. Since $\mathbb R / \mathbb Z$ is compact, there exists points  $x_1$, \ldots, $x_\ell$  in  $\mathbb R / \mathbb Z$ such that the balls $B_{  \delta/ 4 } (x_1)$, \ldots, $B_{  \delta/ 4 } (x_\ell)$  cover $\mathbb R / \mathbb Z$. 
 Let the mappings  $H_{x_j}$  for $j=1,\ldots,\ell$  be  defined by \eqref{eqn:Hmap} and let
$
 \Pi_{x_j}\colon  
 B_{  \delta /  4} \big( \gamma(B_{  \delta  / 4} (x_j ) ) \big) \rightarrow \mathbb R  /\mathbb Z 
$
be  the corresponding retraction maps given by 
\[
\Pi_{x_j} =  \pi \circ  H_{x_j}^{-1}
\]
where $\pi\colon  B_{\delta} (x_j) \times \mathcal{N}_{x_j} 
\rightarrow \mathbb R / \mathbb Z$ 
sends $(x,v) \mapsto x$, i.e.~the projection onto the first coordinate.
We can then set 
\begin{equation}\label{eqn:sigmadef}
\sigma(x) = \Pi_{x_j} (  \tld \gamma(x))
\end{equation} 
for any  $x \in B_{\delta} (x_j)$  
 in order to get a well-defined $C^1$-mapping. 
Furthermore, 
 from 
 the inverse function theorem applied to $\Pi_{x_j}$
 and the estimate \eqref{eq:LocalDiffeo}
 we see that 
  $
  \sigma'(x) >0
 $ whenever $\rho>0$ is sufficiently small (i.e.~$\sigma$ is bi-Lipschitz).
In addition, 
by setting  $\tld \phi = \tld \gamma -  \gamma \circ \sigma$
we see from \eqref{eqn:Hmap}  that $\tld \phi$ belongs to $(W^{2,p})^\bot_\tau$.  
Therefore    $  \gamma \circ \sigma$ is a regular curve equal to $\tld \gamma - \tld\phi$. 
In order to change the roles of $\gamma$ and $\tld \gamma$, we apply the inverse function theorem to $\sigma$ to justify the reparametrisation $\tld \gamma \circ \sigma^{-1} = \gamma \circ \sigma \circ \sigma^{-1} + \tld \phi \circ \sigma^{-1} = \gamma + \phi$, where we set $\phi = \tld\phi \circ \sigma^{-1} \in  (W^{2,p})^\bot_\tau$. 

\end{proof}

Using the above lemmata we can write every $W^{2,p}$-curve $\gamma$ as an approximate normal graph over a smooth curve $\tld \gamma$. Be aware that from now on till the end of this article we consider normal graphs over the curve $\tilde \gamma$ instead of $\gamma.$

\begin{lemma} \label{lem:approxNormalGraph}
Let $\gamma \in W^{2,p}(\mathbb R / \mathbb Z, \R^n)$ be a curve parametrised by arc-length.
For every 
$ \ep_0>0 $  there exists 
a smooth curve $\tld \gamma \in C^\infty(\mathbb R / \mathbb Z, \R^n)$ parametrised by arc-length 
with $\|\gamma - \tld \gamma\|_{W^{2,p}} \leq \ep_0$,
a unit quasi-tangent $\tau $ to the curve $\tilde\gamma$
and  
some  $\phi \in (W^{2,p})^\bot_\tau$
such that 
\begin{equation}\label{eqn:decompositionANG}
 \gamma \circ \sigma = \tld \gamma + \phi 
\end{equation} 
for  
a reparametrisation $\sigma$ of $\R/ \mathbb Z$.
\end{lemma}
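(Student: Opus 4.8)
The plan is to manufacture $\tld\gamma$ by mollification and then to feed it, together with the given curve $\gamma$, into Lemma~\ref{lem:ApproxNormalGraphs} with the roles of the two curves interchanged. First I would set $M := \|\gamma'\|_{W^{1,p}} + 1$ and let $\rho = \rho(p,M)>0$ be the threshold furnished by Lemma~\ref{lem:ApproxNormalGraphs}; since that constant depends on the curve only through $p$ and the a priori bound $M$, it can be fixed now, before $\tld\gamma$ is known. The point of the construction is then to produce a \emph{smooth, arc-length parametrised} curve $\tld\gamma$ that is simultaneously within $\ep_0$ of $\gamma$ in $W^{2,p}$ and within $\rho$ of $\gamma$ in $C^1$.

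For the construction I would take the mollification $\gamma_\ep = \gamma\ast\eta_\ep$ for $\ep>0$ small. By the standard mollifier estimates --- in particular \eqref{eq:SmoothendCurveUniformEstimateDerivative}, which gives $\gamma_\ep'\to\gamma'$ uniformly --- the curve $\gamma_\ep$ is smooth, converges to $\gamma$ in $W^{2,p}$ as $\ep\to 0$, and, because $|\gamma'|\equiv 1$, is regular for $\ep$ small with length $L_\ep = \int_{\R/\Z}|\gamma_\ep'|\to 1$. Rescaling by $1/L_\ep$ (which costs nothing as $L_\ep\to 1$) and then reparametrising by arc-length over $\R/\Z$ yields the desired smooth arc-length curve $\tld\gamma$. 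The one thing to check here is that this reparametrisation does not destroy $W^{2,p}$-closeness: since $\gamma_\ep\to\gamma$ in $W^{2,p}$ one gets $\tfrac{1}{L_\ep}|\gamma_\ep'|\to 1$ in $W^{1,p}$, so the corresponding arc-length reparametrisation map converges to the identity in $W^{2,p}$, and composition is continuous near the identity; hence $\tld\gamma\to\gamma$ in $W^{2,p}$. Choosing $\ep$ small then secures both $\|\gamma-\tld\gamma\|_{W^{2,p}}\le\ep_0$ and, via $W^{2,p}\hookrightarrow C^1$, $\|\gamma-\tld\gamma\|_{C^1}\le\rho$.

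Finally, since $\|\tld\gamma'\|_{W^{1,p}}\le\|\gamma'\|_{W^{1,p}}+1 = M$, I would apply Lemma~\ref{lem:ApproxNormalGraphs} to $\tld\gamma$ (in the role of the lemma's base curve): it delivers a unit quasi-tangent $\tau$ to $\tld\gamma$, and, because $\|\tld\gamma-\gamma\|_{C^1}\le\rho$, also a $\phi\in(W^{2,p})^\bot_\tau$ and a reparametrisation $\sigma$ of $\R/\Z$ with $\gamma\circ\sigma = \tld\gamma+\phi$ --- which is exactly \eqref{eqn:decompositionANG}. I expect the main (modest) obstacle to be purely a matter of bookkeeping the constants in the smoothing step --- $\rho$ has to be pinned down before $\tld\gamma$ is chosen, which is legitimate only because $\rho = \rho(p,M)$ and $M$ depends on $\gamma$ alone --- together with the routine verification that the arc-length reparametrisation is continuous on $W^{2,p}$ near a curve that is already parametrised by arc-length.
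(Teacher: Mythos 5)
Your proposal is correct and follows essentially the same route as the paper: produce a smooth arc-length curve $W^{2,p}$-close (hence, by Sobolev embedding, $C^1$-close) to $\gamma$, and then apply Lemma~\ref{lem:ApproxNormalGraphs} with $\tld\gamma$ in the role of the base curve to obtain $\tau$, $\phi$ and $\sigma$ with $\gamma\circ\sigma=\tld\gamma+\phi$. The only difference is cosmetic: the paper simply invokes density of smooth arc-length curves in $W^{2,p}$, whereas you construct the approximant explicitly by mollification and verify that the arc-length renormalisation preserves $W^{2,p}$-closeness, which is a harmless (indeed slightly more careful) elaboration of the same step.
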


\begin{proof}
Firstly, there exists a smooth curve $\tld \gamma \in C^\infty(\R/\Z,\R^n)$ parametrised by arc-length such that 
$$\| \tilde \gamma - \gamma\|_{W^{2,p}} \leq \ep_0$$
by the density of $C^\infty(\R/\Z,\R^n)$ in $W^{2,p}(\R/\Z,\R^n)$. 
Moreover, we have 
$$ 
\| \gamma -\tld \gamma\|_{C^1} \leq C \ep_0 = \rho
$$  by the Sobolev embeddings.
Thus by taking some $\ep_0>0$ sufficiently small, 
  Lemma~\ref{lem:ApproxNormalGraphs} implies that 
there exists some 
$\phi \in (W^{2,p})^\bot_\tau$
and a reparametrisation $\sigma$ of $\R/ \mathbb Z$
such that 
$
 \gamma \circ \sigma = \tld \gamma + \phi 
$. 
\end{proof} 

The representation of $\gamma$ by a normal graph $\phi$ over $\tld \gamma$ we obtain from Lemma \ref{lem:approxNormalGraph} satisfies the following $C^1$ estimates. These enable us to control the second derivative of $\phi$ by the curvature of $\gamma$ using Lemma \ref{lem:EstSecDerivative}. 

\begin{corollary} \label{cor:smallnesscondition}
For the decomposition \eqref{eqn:decompositionANG}
there exists a constant $C>0$ depending on an upper bound $M$ on $\|\gamma'\|_{W^{1,p}}$ and $p$ 
such that 
\[
\|\phi\|_{L^\infty} \leq C \|  \gamma - \tld \gamma \|_{L^\infty}
\quad \text{and} \quad 
\|\phi'\|_{L^\infty} 
\leq C (1+ \|   \gamma' - \tld \gamma' \|_{L^\infty} )
.\]
\end{corollary}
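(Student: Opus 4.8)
The plan is to unwind the construction behind \eqref{eqn:decompositionANG}, extract from it quantitative control on the reparametrisation $\sigma$, and then conclude by the triangle inequality, using that $\gamma$ and $\tld\gamma$ are parametrised by arc-length. Throughout, recall that in \eqref{eqn:decompositionANG} the curves satisfy $\|\gamma-\tld\gamma\|_{C^1}\le\rho$ with $\rho=\rho(p,M)$ as small as needed — this is precisely the range in which Lemma~\ref{lem:ApproxNormalGraphs} was applied — and that all constants below are allowed to depend on $p$ and the bound $M$ on $\|\gamma'\|_{W^{1,p}}$. From the proofs of Lemmata~\ref{lem:ApproxNormalGraphs} and~\ref{lem:approxNormalGraph} one has $\sigma=\sigma_0^{-1}$, where on each ball $B_{\delta/4}(x_j)$ of the finite covering $\sigma_0(x)=\Pi_{x_j}(\gamma(x))$ with $\Pi_{x_j}=\pi\circ H_{x_j}^{-1}$ and $H_{x_j}$ the map \eqref{eqn:Hmap} built over $\tld\gamma$, while $\Pi_{x_j}(\tld\gamma(x))=x$ for $x\in B_{\delta/4}(x_j)$. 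By \eqref{eq:LocalDiffeo} the map $H_{x_j}$ is bi-Lipschitz with $|H_{x_j}(z_1)-H_{x_j}(z_2)|\ge\tfrac12|z_1-z_2|$, hence $\Pi_{x_j}$ is $2$-Lipschitz on its domain, so — once $\rho$ is small enough that $\gamma(x)$ stays inside the tubular neighbourhood —
\[\sup_x|\sigma_0(x)-x|=\sup_x|\Pi_{x_j}(\gamma(x))-\Pi_{x_j}(\tld\gamma(x))|\le 2\,\|\gamma-\tld\gamma\|_{L^\infty},\]
and $\sup_x|\sigma(x)-x|$ obeys the same bound since $\sigma$ and $\sigma_0$ are mutually inverse. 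The first estimate then follows from $\phi(x)=\gamma(\sigma(x))-\tld\gamma(x)$ and the fact that $\gamma$ is $1$-Lipschitz:
\[\|\phi\|_{L^\infty}\le\sup_x|\gamma(\sigma(x))-\gamma(x)|+\|\gamma-\tld\gamma\|_{L^\infty}\le\sup_x|\sigma(x)-x|+\|\gamma-\tld\gamma\|_{L^\infty}\le 3\,\|\gamma-\tld\gamma\|_{L^\infty}.\]

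For the second estimate I would differentiate \eqref{eqn:decompositionANG}, obtaining $\phi'(x)=\sigma'(x)\,\gamma'(\sigma(x))-\tld\gamma'(x)$; since $|\gamma'|\equiv|\tld\gamma'|\equiv1$ it suffices to bound $\sigma'$. One may quote such a bound from the inverse-function-theorem argument in the proof of Lemma~\ref{lem:ApproxNormalGraphs}, or obtain it directly: pairing the differentiated relation with $\tau$ and using $\langle\phi',\tau\rangle=-\langle\phi,\tau'\rangle$ (which holds since $\langle\phi,\tau\rangle\equiv0$) yields
\[\sigma'(x)\,\langle\gamma'(\sigma(x)),\tau(x)\rangle=\langle\tld\gamma'(x),\tau(x)\rangle-\langle\phi(x),\tau'(x)\rangle,\]
whose numerator lies in $[\tfrac12,\tfrac32]$ and whose factor $\langle\gamma'(\sigma(x)),\tau(x)\rangle$ is at least $\tfrac12$ — by \eqref{eq:ApproximateTangent} (giving $\|\tau-\tld\gamma'\|_{L^\infty}\le\tfrac14$, as $\tau$ is a quasi-tangent to $\tld\gamma$), the bound on $\|\tau'\|_{L^\infty}$ from Corollary~\ref{cor:DerivativeAprTangent}, the first estimate, $|\tld\gamma'|\equiv1$, and $\|\gamma-\tld\gamma\|_{C^1}\le\rho$ small. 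Hence $\|\sigma'\|_{L^\infty}\le C(p,M)$, so $\|\phi'\|_{L^\infty}\le\|\sigma'\|_{L^\infty}+1\le C(p,M)$, which in particular implies the asserted $\|\phi'\|_{L^\infty}\le C(1+\|\gamma'-\tld\gamma'\|_{L^\infty})$.

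Once the construction is unwound this is essentially bookkeeping, so I do not foresee a genuine obstacle. The one point that needs care — and the reason the constant is claimed to depend only on $p$ and $M$ — is that the bi-Lipschitz constants of $H_{x_j}$, $\Pi_{x_j}$, $\sigma$ and $\sigma_0$ must be independent of the auxiliary curve $\tld\gamma$. This is exactly what \eqref{eq:LocalDiffeo} supplies, since that estimate was established with $\delta=\delta(p,M)$ using only $\|\tld\gamma'\|_{W^{1,p}}\le M+\ep_0$ together with the Sobolev embedding $W^{2,p}\hookrightarrow C^{1,1-\frac1p}$, and never the uncontrolled quantity $\|\tld\gamma''\|_{L^\infty}$.
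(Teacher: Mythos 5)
Your proof is correct and follows essentially the same route as the paper: both estimates are obtained by unwinding the construction of $\sigma$ via the retractions $\Pi_{x_j}$, bounding $|\sigma(x)-x|$ by the Lipschitz constant of $\Pi_{x_j}$ times $\|\gamma-\tld\gamma\|_{L^\infty}$, and then concluding with the triangle inequality and a uniform bound on $\sigma'$. The only (harmless) deviation is that you derive the bound on $\sigma'$ explicitly from the orthogonality relation $\langle\phi',\tau\rangle=-\langle\phi,\tau'\rangle$, whereas the paper simply invokes the uniform bi-Lipschitz property of $\sigma$ established in the proof of Lemma~\ref{lem:ApproxNormalGraphs}.
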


\begin{proof}
From the construction of $\sigma$ given by \eqref{eqn:sigmadef} we see that
\[
	|\sigma(x) - x| = |\sigma(x) - \sigma \circ \sigma^{-1} (x)| \leq \|\sigma'\|_{L^\infty} |x-\sigma^{-1}(x)|
\] 
and 
\[
|x-\sigma^{-1}(x)|
= |\Pi_{x_j}(\tld \gamma(x)) - \Pi_{x_j}(  \gamma(x)) |
\leq \big( \max_j \|D \Pi_{x_j}\|_{L^\infty} \big) |\tld\gamma(x) - \gamma(x) |
,\]
since there exists some ball $B_{\delta}(x_j)$ such that $x = \Pi_{x_j}(  \tld\gamma(x))$. 
As  we have 
\[
|\phi(x)| = | \gamma (\sigma(x)) - \tld \gamma(x) | 
 \leq | \gamma (\sigma(x)) - \gamma (x) |  + | \gamma (x) - \tld \gamma(x) | 
\]
and 
$| \gamma (\sigma(x)) - \gamma (x) |   \leq \|\gamma'\|_{L^\infty} |\sigma(x) - x|$,
it follows  that 
\[
\|\phi\|_{L^\infty} 
\leq (1+ C \| \gamma'\|_{W^{1,p}} )\|  \gamma - \tld\gamma \|_{L^\infty} 
\]
by the Sobolev embeddings. %
In addition, we  have  
\begin{align*}
 \|\phi'\|_{L^\infty}  = \| (\gamma \circ \sigma) ' - \tld \gamma' \|_{L^\infty} 
& \leq 
\| \gamma' - \tld \gamma' \|_{L^\infty} 
+ \|\gamma' \|_{L^\infty}
+ \|(\gamma \circ \sigma)'\|_{L^\infty}  
 \\ & \leq   \|  \gamma' -\tld \gamma'\|_{L^\infty} + C \| \gamma'\|_{W^{1,p}}  
\end{align*}
from the uniform bi-Lipschitz property of $\sigma$ 
and the Sobolev embeddings.
\end{proof}

 \subsection{Existence of discrete-time approximations} \label{sec:Existsdiscretetime}
After breaking 
the reparametrisation invariance of \eqref{eqn:pEnergy}
by way of the approximate normal graphs,
it is now a straight forward matter to prove the short-time existence of solutions for the minimising movement scheme.

Let us first consider 
an initial curve $\Gamma \in W^{2,p}(\R/L\Z,\R^n)$ of length $L$   parametrised by  arc-length. In the following it will be essential that all estimates only depend on an upper bound on the energy of this curve. 

We first note that an upper bound on the energy also implies a lower bound on the length,
since by Fenchel's theorem together with H\"older's inequality we have 
$$
 2 \pi \leq \int_{\R / L \Z} |\kappa| ds \leq  L ^{1-\frac 1p } \bigg(\int_{\R / L \Z} |\kappa|^p ds \bigg )^{\frac 1p}
$$
so that 
$$
 L^ {p-1} \geq \frac {(2 \pi)^p} {p \,  E^{(p)}(\Gamma)}.
$$
By scaling the results of Section~\ref{sec:NormalGraphs},
we can drop the assumption that the curve is of unit length
and recover all the previous estimates concerning approximate normal graphs (with proviso that the relevant constants now depend on $\lambda$ and the energy bound). 
In particular, 
we say that the unit vector field $\tau$ is quasi-tangent to a $W^{2,p}$-curve $\gamma$ of length $L$ 
whenever  $\tau(\frac \cdot L)$ is quasi-tangent to the curve $\gamma (\frac \cdot L)$.

Now   for the initial curve, the result of 
Lemma~\ref{lem:approxNormalGraph}
implies that 
there exists 
a smooth curve $\tld\gamma$ parametrised by arc-length,
a unit quasi-tangent $\tau$ to the curve $ \tld \gamma$  
and a perturbation $\Phi \in (W^{2,p})_\tau^\perp$
such that 
$\Gamma \circ \sigma = \tld \gamma + \Phi$.
Moreover, by combining the norm bounds of Lemma~\ref{lem:approxNormalGraph} with Corollary~\ref{cor:smallnesscondition} and the Sobolev embeddings, we see that  
$$
\quad \|\Phi \|_{L^\infty} \leq \mu 
\quad \text{and}\quad 
\|\Phi'\|_{L^\infty} \leq W
$$ 
for some sufficiently small constant $\mu=\mu(p,\lambda,E(\Gamma))>0$
and some constant $W = W (p,\lambda,E(\Gamma))>2$.

For a series of discrete time steps $0=t_0< t_1 < t_2 < \cdots$
we seek to define the curves 
\begin{equation}\label{eqn:timeStep}
\gamma_{t_j} = \tld \gamma + \phi_{t_j}
\end{equation}
with the initial case $\gamma_{t_0} = \tld \gamma + \Phi$.
The time differences 
$t_{j+1} - t_j = h$  are set to be equal to a fixed parameter $h>0$ (that we shall ultimately send to zero). 
We want to recursively define  $\phi_{t_{j+1}}$ for the next time step as the minimiser 
$$ %
 \phi_{t_{j+1}} = \underset{\phi \,\in \, \mathscr{V}}{\argmin} \bigg \{ E(\tld \gamma+\phi) + \frac 1 {2h} \int_{\mathbb R / L \mathbb Z} 
 | P^{\bot}_{\gamma_{t_j}'}( \tld \gamma + \phi- \gamma_{t_j})  |^2 |\gamma_{t_j}'|dx
 \bigg\} ,
$$%
where the class of admissible perturbations is given by 
 $$
\mathscr{V} = \mathscr{V}(\mu,W) =  \{\phi \in (W^{2,p})_\tau^\bot : \|\phi \|_{L^\infty} < 3 \mu, \|\phi '\|_{L ^\infty} < 3 W\}.
 $$
The following lemma states that these discrete-time solutions can be constructed for at least a short time.  
\begin{lemma}
There exists
a finite   time $T>0$ depending only on $p$, $\lambda$ and $E(\Gamma )$ 
such that the  solutions 
$
\gamma_{t_j} = \tld \gamma + \phi_{t_j}
$
exist for a series of discrete times 
$0 = t_0 < t_1< t_2< \cdots <t_N<T $.
\end{lemma}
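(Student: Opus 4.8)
The plan is to construct the curves $\gamma_{t_j}=\tld\gamma+\phi_{t_j}$ inductively, verifying at each step that the minimisation problem admits a solution lying in the \emph{open} constraint set $\mathscr V$, which is precisely what allows the scheme to be iterated for a definite time. Since the constant $\mu=\mu(p,\lambda,E(\Gamma))$ may be taken as small as we wish, I would fix it so that $3\mu\le K$, where $K=K(p,\lambda,E(\Gamma))$ is the constant from Lemma~\ref{lem:RegularCurve}; then the scaled versions of Lemmata~\ref{lem:RegularCurve} and~\ref{lem:EstSecDerivative} apply to every $\phi$ in the weak $W^{2,p}$-closure $\overline{\mathscr V}=\{\phi\in(W^{2,p})^\bot_\tau:\|\phi\|_{L^\infty}\le3\mu,\ \|\phi'\|_{L^\infty}\le3W\}$ of $\mathscr V$. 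The induction hypothesis at level $j$ is that $\phi_{t_0},\dots,\phi_{t_j}$ have been found with $\phi_{t_i}\in\mathscr V$ and $E(\gamma_{t_i})\le E(\Gamma)$; it holds at $j=0$ because $\phi_{t_0}=\Phi\in\mathscr V$ and $E(\gamma_{t_0})=E(\tld\gamma+\Phi)=E(\Gamma\circ\sigma)=E(\Gamma)$ by the reparametrisation invariance of $E$.

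For the inductive step I would first minimise the functional $\mathcal F_j(\phi)=E(\tld\gamma+\phi)+\frac1{2h}\int_{\R/L\Z}|P^\bot_{\gamma_{t_j}'}(\tld\gamma+\phi-\gamma_{t_j})|^2|\gamma_{t_j}'|dx$ over the closed set $\overline{\mathscr V}$ by the direct method. The set $\overline{\mathscr V}$ is sequentially weakly closed in $W^{2,p}(\R/L\Z,\R^n)$ because weak $W^{2,p}$-convergence forces $C^1$-convergence, which preserves the two sup-norm constraints and the orthogonality $P^T_\tau\phi=0$. The functional is coercive on $\overline{\mathscr V}$ since the second estimate of Lemma~\ref{lem:EstSecDerivative} bounds $E(\tld\gamma+\phi)$ below by $c\|\phi''\|_{L^p}^p-C$, which with the sup-norm constraints controls $\|\phi\|_{W^{2,p}}$; and it is sequentially weakly lower semicontinuous because along a minimising sequence $\phi_k\rightharpoonup\phi$ the arc-length element $|\tld\gamma'+\phi_k'|dx$ and the vector $P^\bot_{\gamma_{t_j}'}(\tld\gamma+\phi_k-\gamma_{t_j})$ converge uniformly while $\kappa_{\tld\gamma+\phi_k}$ converges weakly in $L^p$, so $E^{(p)}$ and the length term are lower semicontinuous and the penalty term is continuous. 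This produces a minimiser $\phi_{t_{j+1}}\in\overline{\mathscr V}$.

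It remains to see that $\phi_{t_{j+1}}$ in fact lies in the open set $\mathscr V$, and this is where the lifetime $T$ is fixed. Since $\phi_{t_j}\in\overline{\mathscr V}$ is a competitor and the penalty vanishes at it, minimality gives both $E(\gamma_{t_{j+1}})\le E(\gamma_{t_j})\le E(\Gamma)$ and a bound on $\tfrac1h\int|P^\bot_{\gamma_{t_j}'}(\gamma_{t_{j+1}}-\gamma_{t_j})|^2|\gamma_{t_j}'|dx$ by $2\bigl(E(\gamma_{t_j})-E(\gamma_{t_{j+1}})\bigr)$. From $E(\gamma_{t_{j+1}})\le E(\Gamma)$ and the second estimate of Lemma~\ref{lem:EstSecDerivative}, $\|\phi_{t_{j+1}}-\Phi\|_{W^{2,p}}$ is bounded by a constant depending only on $p,\lambda,E(\Gamma)$. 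Summing the dissipation bound over the first $j+1$ steps telescopes the penalty terms, so their sum is $\le E(\gamma_{t_0})-E(\gamma_{t_{j+1}})\le E(\Gamma)$; using $|\gamma_{t_i}'|\ge\tfrac12$ (Lemma~\ref{lem:RegularCurve}), the first estimate of Lemma~\ref{lem:EstSecDerivative} applied to the approximate-normal vector $\phi_{t_{i+1}}-\phi_{t_i}$, and then Cauchy--Schwarz, I would obtain $\|\phi_{t_{j+1}}-\Phi\|_{L^2}\le\sum_{i\le j}\|\phi_{t_{i+1}}-\phi_{t_i}\|_{L^2}\le C\sqrt{t_{j+1}}$. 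A Gagliardo--Nirenberg interpolation between $W^{2,p}$ and $L^2$ on $\R/L\Z$ then yields some $\theta\in(0,1)$ with $\|\phi_{t_{j+1}}-\Phi\|_{C^1}\le C\|\phi_{t_{j+1}}-\Phi\|_{W^{2,p}}^{\theta}\|\phi_{t_{j+1}}-\Phi\|_{L^2}^{1-\theta}\le C\,t_{j+1}^{(1-\theta)/2}$. Choosing $T=T(p,\lambda,E(\Gamma))>0$ so small that $C\,T^{(1-\theta)/2}<2\mu$ (and $\mu\le W$, which we may assume) gives $\|\phi_{t_{j+1}}\|_{L^\infty}<\|\Phi\|_{L^\infty}+2\mu\le3\mu$ and $\|\phi_{t_{j+1}}'\|_{L^\infty}<\|\Phi'\|_{L^\infty}+2\mu\le3W$ for all $j$ with $t_{j+1}<T$, i.e.\ $\phi_{t_{j+1}}\in\mathscr V$. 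As an interior minimiser over $\overline{\mathscr V}$, $\phi_{t_{j+1}}$ is also a minimiser over $\mathscr V$, hence a legitimate choice in the scheme. This closes the induction, which may be carried out for every $j$ with $t_{j+1}=(j+1)h<T$, producing discrete times $0=t_0<t_1<\cdots<t_N<T$.

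The step I expect to be the main obstacle is the final one: obtaining control of the minimiser in the $C^1$-norm that is uniform in \emph{both} the step index $j$ and the step size $h$, so that the iterates cannot reach $\partial\mathscr V$ before the time $T$ (which must depend only on $p,\lambda,E(\Gamma)$). This forces one to combine the energy monotonicity of the scheme, the $L^2$-displacement bound squeezed out of the penalty term, and the interpolation inequality. By contrast, the per-step existence via the direct method is routine once Lemma~\ref{lem:EstSecDerivative} supplies coercivity, and the dissipation inequality is immediate from minimality.
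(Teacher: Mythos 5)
Your proposal is correct and follows essentially the same route as the paper: induction on the time step, energy monotonicity and the telescoping dissipation bound from comparison with the competitor $\phi_{t_j}$, the first estimate of Lemma~\ref{lem:EstSecDerivative} to pass from the projected penalty to a full $L^2$-displacement bound of order $\sqrt{t_j}$, Gagliardo--Nirenberg interpolation against the uniform $W^{2,p}$-bound to keep the iterates strictly inside $\mathscr V$ for $t<T(p,\lambda,E(\Gamma))$, and the direct method with the curvature-based lower semicontinuity. The only (harmless) organisational difference is that you minimise over the weak closure $\overline{\mathscr V}$ and then show the minimiser is interior, whereas the paper runs the same quantitative estimates along a minimising sequence in $\mathscr V$ chosen with $\mathcal F_j(\phi_n)\le\mathcal F_j(\phi_{t_j})$ so that its limit already satisfies the strict bounds.
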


\begin{proof}
We seek to establish the existence of 
the perturbations 
$\phi_{t_{j+1}}$ 
that are minimisers of the functionals 
\[
\mathcal{F}_j (\phi) = 
E(\tld \gamma+\phi) + \frac 1 {2h} \int_{\mathbb R / L \mathbb Z} 
 | P^{\bot}_{\gamma_{t_j}'}( \tld \gamma + \phi- \gamma_{t_j})  |^2 |\gamma_{t_j}'|dx
\]
over the admissible class $\mathscr{V}$.
To do so we proceed by an induction argument with an initial base case 
$\phi_{t_0} = \Phi$ given by the decomposition of the initial curve $\Gamma$. 
Indeed, let us assume there exist minimisers $\phi_{t_{i+1}}$
of $\mathcal{F}_{i}$ over the class $\mathscr{V}$
for $i=0,1,\ldots,j-1$.

Now as $\mathcal{F}_i (\phi_{t_{i+1}}) \leq \mathcal{F}_i (\phi_{t_{i}})$ 
for $i=0,1,\ldots,j-1$ (i.e.~$\phi_{t_i}$ is  a competitor), we note that
\[
 E(\gamma_{t_j}) \leq E(\gamma_{t_0}) = E (\Gamma)
\]
and
\[
\frac 1 {2h} \int_{\mathbb R / L \mathbb Z} 
 | P^{\bot}_{\gamma_{t_i}'}( \gamma_{t_{i+1}} - \gamma_{t_i})  |^2 |\gamma_{t_i}'|dx
 \leq E(\gamma_{t_i}) - E(\gamma_{t_{i+1}}).
\]
In which case Lemma~\ref{lem:EstSecDerivative} implies that 
the $L^p$-norm of $\gamma''_{t_j}$ is uniformly bounded by a constant which depends only on $p$ and $E(\Gamma)$. In addition, we   have  
\[
\frac 1 {  h} \int_{\mathbb R /L \mathbb Z} 
 |   \gamma_{t_{i+1}} - \gamma_{t_i}  |^2  dx
 \leq C \big(  E(\gamma_{t_i}) - E(\gamma_{t_{i+1}}) \big)  .
\]
Then by summing up the latter inequalities, we get the a priori estimate 
\begin{equation} \label{eq:APriori_I}
\sum_{i=0}^{j-1} \frac 1 { h} \int_{\mathbb R / L \mathbb Z} 
 |   \gamma_{t_{i+1}} - \gamma_{t_i}  |^2  dx
 \leq  C \big( E(\gamma_{t_0}) - E(\gamma_{t_{j}})   \big)
.\end{equation}
We also recall from H\"older's inequality that
\begin{equation} \label{eq:APriori_II}
\begin{aligned}
\| \gamma_{t_0} - \gamma_{t_{j}} \|_{L^2} 
& \leq \sum_{i=0}^{j-1}
 \frac{\| \gamma_{t_{i+1}} - \gamma_{t_i} \|_{L^2} }{\sqrt h}  \sqrt{h}\\
& \leq 
\left ( \sum_{i=0}^{j-1} \frac 1 { h} \int_{\mathbb R / L \mathbb Z} 
 |   \gamma_{t_{i+1}} - \gamma_{t_i}  |^2  dx\right )^{\frac 12} 
 \left ( \sum_{i=0}^{j-1} h \right )^{\frac 12 } 
 \\
 &\leq C \sqrt{E(\gamma_{t_0})} \sqrt{t_{j} }  
\end{aligned}
\end{equation}
and  
from the Gagliardo-Nirenberg interpolation inequality
we get 
\[
\| \gamma_{t_0} '  - \gamma_{t_{j}} '  \|_{L^\infty}
\leq C \| \gamma_{t_0} ''  - \gamma_{t_{j}} ''\|_{L^p}^\alpha \|\gamma_{t_0}   - \gamma_{t_{j}}  \|_{L^2}^{1-\alpha} 
\]
with $\alpha = \frac{3p}{5p-2}$. 
Since Lemma \ref{lem:EstSecDerivative} implies that the $L^p$-norm of the second derivatives of $\gamma_0$ and $\gamma_{t_j}$ are uniformly bounded,
 we conclude that
 \[
\| \gamma_{t_0} '  - \gamma_{t_{j}} '  \|_{L^\infty}
\leq C  (\sqrt{t_j} ) ^{1-\alpha} 
\]
for a constant $C>0$ depending on $p$, $\lambda$ and $E(\Gamma)$.
Furthermore, there exists a sufficiently small $T>0$ depending on $p$, $\lambda$ and $E(\Gamma)$ such that 
\begin{align}
\|\gamma_{t_j}'\|_{L^\infty}  
& \leq \|\gamma_{t_0}'\|_{L^\infty } + \|\gamma_{t_j} '-  \gamma_{t_0} '\|_{L^\infty} \nonumber \\
& \leq 1+ W +  C  (\sqrt{t_j} ) ^{1-\alpha} \nonumber \\
&< 2W
\end{align}
whenever $0< t_j<T$.
Since 
$$
 \|\gamma_{t_0} - \gamma_{t_j}\|_{L^\infty} \leq C \| \gamma_{t_0} ''  - \gamma_{t_{j}} ''\|_{L^p}^\beta \|\gamma_{t_0}   - \gamma_{t_{j}}  \|_{L^2}^{1-\beta} 
$$
with $\beta =\frac p {5p-2}$
by the Gagliardo-Nirenberg interpolation inequality,
we  also  have 
\begin{equation}
\|\gamma_{t_j}\|_{L^\infty} < 2 \mu
\end{equation}
whenever $0< t_j<T$.

In fact we can show that the same estimates hold for a suitably chosen minimising sequence.
Let us assume that $(\phi_n)$  is a minimising sequence for the functional $\mathcal{F}_j$ in the  class $\mathscr{V}$, i.e.
$
\mathcal{F}_j(\phi_n) \to \inf_{\phi \in \mathscr{V}} \mathcal{F}_j(\phi)
$
and note that $\mathcal{F}_j$ is bounded from below by construction. 
As  $\phi_{t_j}$ is still a competitor, we can assume without loss of generality that  
\[
\mathcal{F}_j( \phi_n) \leq \mathcal{F}_j(\phi_{t_j}) = E(\gamma_{t_j}) \leq E(\gamma_{t_0})
\]
for all $n \in \mathbb{N}$. In which case we can repeat the argument from the above  to obtain the bound  
\[
\| \gamma_{t_0} ' - \gamma_n'   \|_{L^\infty}  \leq C ( \sqrt{t_{j+1}})^{1-\alpha} 
\]
with $\gamma_n = \tld \gamma + \phi_n$.
It then follows that %
\begin{equation}
 \|\gamma_n'\|_{L^\infty} < 2 W
\end{equation}
for all $0<t_{j+1}< T$. 

\emph{Compactness}.
As a consequence of Lemma~\ref{lem:EstSecDerivative},
the minimising sequence 
$(\phi_n)$ is uniformly bounded in $W^{2,p}(\R/L\Z,\R^n)$.
It then follows that there exists a weakly converging subsequence
in $W^{2,p}(\R/L\Z,\R^n)$ which we also denoted by $(\phi_{n})$.
In addition, the Rellich-Kondra\v{s}ov compactness theorem implies that 
the subsequence $(\phi_{n})$ is strongly convergent in $C^1(\R/L\Z,\R^n)$. 
Let us denote the limit of this sequence by $\phi$. 
Since we have already establish that 
$\|\phi_n\|_{L^\infty} < 2\mu$
and $\|\phi_n'\|_{L^\infty} < 2W$,
it follows that 
$\|\phi\|_{L^\infty} < 2\mu$
and $\|\phi'\|_{L^\infty} < 2W$.
Therefore the limit $ \phi $ also belongs to $\mathscr{V}$. 

\emph{Lower semi-continuity}.
Let us finally prove that 
$$
 \mathcal{F}_j (\phi)   \leq \liminf_{n \rightarrow \infty} \mathcal{F}_j(\phi_n).
$$
As the $L^2$-term in the functional $\mathcal{F}_j$  
converges by the theorem of Rellich-Kondra\v{s}ov and the angle between $\tau$ and 
$\gamma_{t_j}'$ is uniformly bounded strictly away from $\tfrac \pi2$,
it suffices to show that 
\begin{equation}\label{eqn:Energylsc} 
 E^{(p)} (\tld \gamma + \phi)   \leq \liminf_{n \rightarrow \infty}  E^{(p)} (\tld \gamma + \phi_n).
\end{equation}
Note that the length term $\lambda \int_{\R/L\Z} ds$ appearing in the considered energy $E$, cf. \eqref{eqn:pEnergy}, can be dropped as well due to the convergence of the sequence $(\phi_n)$ in $C^1(\R/L\Z,\R^n)$. 
In order to prove \eqref{eqn:Energylsc} we use the curvature formula for $\kappa_{{\tld\gamma} + \phi_n}$
to  rewrite 
\[
E^{(p)} ( {\tld\gamma} + \phi_n)  
 = 
 \int_{\mathbb R /L \mathbb Z}  
 \frac { \left|P^\bot_{{\tld\gamma}' +  \phi_n'} ({\tld\gamma}'' + \phi_n '') \right|^p}
 {|{\tld\gamma}' + \phi_n'|^{2p}}   |{\tld\gamma}' + \phi_n'| ds 
\]
as the expression 
\begin{align*}
 E^{(p)}( {\tld\gamma}_n) &= \int_{\mathbb R / L \mathbb Z}  \frac { \left|P^\bot_{{\tld\gamma}' +  \phi'} ({\tld\gamma}'' + \phi_n '') \right|^p}{|{\tld\gamma}' + \phi'|^{2p}}   |{\tld\gamma}' + \phi'| ds 
 + \mathscr{I}_1 + \mathscr{I}_2 + \mathscr{I}_3
,\end{align*}
where 
\begin{align*}
 \mathscr{I}_1 & =  \int_{\mathbb R /L \mathbb Z}  
 \bigg(
 { \big  |P^\bot_{{\tld\gamma}' +  \phi_n'} ({\tld\gamma}'' + \phi_n '')  \big |^p}
 - 
 { \big  |P^\bot_{{\tld\gamma}' +  \phi'} ({\tld\gamma}'' + \phi_n '') \big|^p}
 \bigg)
 \frac{|{\tld\gamma}' + \phi_n'|}{|{\tld\gamma}' + \phi_n'|^{2p}}  ds
 \\
 \mathscr{I}_2 & =\int_{\mathbb R /L \mathbb Z}  
  \left(
 \frac 1 {|{\tld\gamma}' + \phi_n'|^{2p}} 
 - \frac 1 {|{\tld\gamma}' + \phi'|^{2p}}
 \right)
 \left|P^\bot_{{\tld\gamma}' +  \phi'} ({\tld\gamma}'' + \phi_n '') \right|^p 
  |{\tld\gamma}' + \phi_n'| ds  \\
 \mathscr{I}_3 & =\int_{\mathbb R / L \mathbb Z}  \frac { \left|P^\bot_{{\tld\gamma}' +  \phi'} ({\tld\gamma}'' + \phi_n '') \right|^p}{|{\tld\gamma}' + \phi_n'|^{2p}}   
 \bigg( |{\tld\gamma}' + \phi_n'| 
 -      |{\tld\gamma}' + \phi'|\bigg) ds 
.\end{align*}
The terms $\mathscr{I}_1$, $\mathscr{I}_2$ and  $\mathscr{I}_3$ vanish in the limit due to the convergence of the sequence $(\phi_n)$ in $C^1(\R/L\Z,\R^n)$ 
and the uniform bound on the $W^{2,p}$-norm of $\phi_n$. %
Moreover, the expression 
$$\mathscr{I}({\tld\gamma} + \phi_n) = \left( \int_{\mathbb R / L \mathbb Z}  \frac { \left|P^\bot_{{\tld\gamma}' +  \phi'} ({\tld\gamma}'' + \phi_n '') \right|^p}{|{\tld\gamma}' + \phi'|^{2p}}   |{\tld\gamma}' + \phi'| ds \right)^{\frac 1p}$$  
defines a norm  equivalent to that of the  $W^{2,p}$-norm. 
In which case \eqref{eqn:Energylsc} 
follows from the lower semicontinuity of norms under weak convergence.
\end{proof}

For later reference, let us also state the following a priori estimate for the piecewise linear interpolations 
that results 
from \eqref{eq:APriori_I} and  \eqref{eq:APriori_II}.

\begin{corollary} \label{cor:APrioriEstimate}
The piecewise linear interpolations  
 $$
   \phi^{(h)}(t, \,\cdot\,) 
   = \phi_{t_{j}} + \frac {t-t_j}{h}
   \big (\phi_{t_{j+1}} - \phi_{t_j} \big) , \quad t_j \leq t \leq t_{j+1},
 $$
 satisfies the estimates
 $$
  \| \phi_{t''}^{(h)} - \phi_{t'}^{(h)} \|_{L^2} \leq C \sqrt{t'' - t'}
 $$
 and
 $$
  \int_{t'}^{t''} \int_{\mathbb R /L \mathbb Z}
  |\partial_t \phi^{(h)}(t,s) |^2 ds dt 
  \leq C \big(E(\gamma_{t'}) - E(\gamma_{t''}) \big)
 $$
 for any $0\leq t'< t'' <   T<\infty$. 
\end{corollary}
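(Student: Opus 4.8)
The plan is to obtain both estimates by summing (telescoping) the per-step energy-dissipation inequality already established in the proof of the preceding lemma, using that $\phi^{(h)}$ is continuous and piecewise linear in $t$ with values in $L^2(\R/L\Z,\R^n)$, so that its time derivative is piecewise constant in $t$. Since the dissipation estimate is the more fundamental of the two, I would prove it first and then deduce the H\"older bound from it.

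For the dissipation estimate, I would first record that on a subinterval $[t_j,t_{j+1}]$ one has $\partial_t\phi^{(h)}(t,\cdot)=h^{-1}(\phi_{t_{j+1}}-\phi_{t_j})$, whence
\[
\int_{t_j}^{t_{j+1}}\!\!\int_{\R/L\Z}|\partial_t\phi^{(h)}|^2\,ds\,dt=\frac1h\int_{\R/L\Z}|\phi_{t_{j+1}}-\phi_{t_j}|^2\,ds .
\]
Because $\phi_{t_j},\phi_{t_{j+1}}\in(W^{2,p})^\bot_\tau$, the difference $\phi_{t_{j+1}}-\phi_{t_j}=\gamma_{t_{j+1}}-\gamma_{t_j}$ is orthogonal to $\tau$, so the pointwise bound $|v|\le C\,|P^\bot_{\gamma_{t_j}'}v|$ of Lemma~\ref{lem:EstSecDerivative} applies with a constant depending only on $p,\lambda$ and $E(\Gamma)$ (all iterates stay in the admissible class $\mathscr{V}$ for $t_j<T$, so the constant is uniform in $j$ and, crucially, in $h$). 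Combining this with $|\gamma_{t_j}'|\ge\tfrac12$ from Lemma~\ref{lem:RegularCurve} and the minimality $\mathcal{F}_j(\phi_{t_{j+1}})\le\mathcal{F}_j(\phi_{t_j})=E(\gamma_{t_j})$—i.e.\ exactly the chain of inequalities used to derive \eqref{eq:APriori_I}—gives $h^{-1}\int|\phi_{t_{j+1}}-\phi_{t_j}|^2\,ds\le C\big(E(\gamma_{t_j})-E(\gamma_{t_{j+1}})\big)$. Summing over the grid subintervals meeting $[t',t'']$ (enlarging $[t',t'']$ to the smallest grid-aligned interval containing it) and using that $j\mapsto E(\gamma_{t_j})$ is non-increasing, the right-hand side telescopes and yields $\int_{t'}^{t''}\!\int|\partial_t\phi^{(h)}|^2\,ds\,dt\le C\big(E(\gamma_{t'})-E(\gamma_{t''})\big)$.

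For the H\"older estimate I would use that $\phi^{(h)}_{t''}-\phi^{(h)}_{t'}=\int_{t'}^{t''}\partial_t\phi^{(h)}(t,\cdot)\,dt$ in $L^2(\R/L\Z,\R^n)$, so by the triangle inequality and then Cauchy--Schwarz in the time variable,
\[
\|\phi^{(h)}_{t''}-\phi^{(h)}_{t'}\|_{L^2}\le\int_{t'}^{t''}\|\partial_t\phi^{(h)}(t,\cdot)\|_{L^2}\,dt\le\sqrt{t''-t'}\left(\int_{t'}^{t''}\!\!\int_{\R/L\Z}|\partial_t\phi^{(h)}|^2\,ds\,dt\right)^{1/2}.
\]
Inserting the dissipation estimate just proved together with $E(\gamma_{t'})-E(\gamma_{t''})\le E(\gamma_{t_0})=E(\Gamma)$ gives $\|\phi^{(h)}_{t''}-\phi^{(h)}_{t'}\|_{L^2}\le C\sqrt{E(\Gamma)}\,\sqrt{t''-t'}$, which reproduces \eqref{eq:APriori_II} between arbitrary times.

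There is no genuine obstacle here—the corollary is essentially a reorganisation of estimates already contained in the proof of the previous lemma. The only points needing a little care are: (i) that $\phi_{t_{j+1}}-\phi_{t_j}$ stays orthogonal to $\tau$, so that the constant in Lemma~\ref{lem:EstSecDerivative} is uniform in $j$ and independent of $h$ (this is where it matters that all discrete solutions remain in $\mathscr{V}$ while $t_j<T$); and (ii) the reduction from arbitrary $t',t''$ to grid times, handled by replacing $[t',t'']$ with the smallest grid-aligned interval containing it and absorbing the extra energy difference into $E(\Gamma)$.
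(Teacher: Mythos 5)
Your proposal is correct and takes essentially the same route as the paper: the authors give no separate proof, simply noting that the corollary ``results from \eqref{eq:APriori_I} and \eqref{eq:APriori_II}'', and those two displayed estimates are obtained by exactly the telescoping of the per-step dissipation inequality (via the projection bound of Lemma~\ref{lem:EstSecDerivative}) and the Cauchy--Schwarz-in-time argument that you spell out. The only detail you add beyond the paper is the reduction from arbitrary $t',t''$ to grid times, which is a harmless refinement the paper leaves implicit.
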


\begin{remark}
We thus obtain a piecewise linearly interpolated  solution 
\begin{equation}\label{eqn:PLI}
\gamma_t^{(h)}  = \tld \gamma + \phi^{(h)}_t, \quad 0\leq t < T,
\end{equation} for  the minimising
movements scheme. 
\end{remark}

 \section{Weak solutions}
 
 \subsection{Euler-Lagrange equations for the approximations}
In order to improve the regularity of the approximations, 
we derive the Euler-Lagrange equations related to the minimising movement scheme.

We recall the following expression (cf.~\cite[Lemma~2.1]{Dziuk2002}) for the first variation of the $p$-elastic energy, namely  
 \begin{equation} \label{eq:FirstVariation}
 \delta_\psi E^{(p)} (\gamma) = \int_{\mathbb R /L \mathbb Z} |\kappa|^{p-2} \langle \kappa , \delta_\psi \kappa \rangle ds
 + \frac 1 p \int_{\mathbb R / L \mathbb Z} |\kappa|^p \langle \partial_s \gamma, \partial_s \psi\rangle ds
 \end{equation}
 where
 $
 \delta_\psi \kappa =  \left(\partial_s^2 \psi \right) ^\bot - \langle \kappa , \partial_s \psi \rangle \partial_s \gamma - 2\langle \partial_s \gamma ,   \partial_s \psi  \rangle \kappa
 $, cf.~Proposition~\ref{pro:FirstVariationPElastic}. The first variation of the length term appearing in the definition of the energy $E$, cf. \eqref{eqn:pEnergy}, is given by 
 \begin{equation} \label{eq:FirstVariationLengthTerm}
 	\delta_\psi \Big( \lambda \int_{\R/L\Z} ds\Big) = \lambda \int_{\R/L\Z} \langle \partial_s\gamma, \partial_s \psi\rangle ds.
 \end{equation}
 Combining \eqref{eq:FirstVariation} and \eqref{eq:FirstVariationLengthTerm} with the fact that 
 $$
 \partial_s \psi = \frac 1 {|\gamma'|} \partial_x \psi,
 $$
	where $|\gamma'|=|\partial_x \gamma|$,
 we get 
 \begin{align*}
 \partial^2 _s \psi 
 & = \frac 1 {|\gamma'|} \partial_x \Big( \frac 1 {|\gamma'|} \partial_x \psi \Big) 
  = \frac 1 {|\gamma'|^2} \partial_x ^2 \psi - \frac 1 {|\gamma'| ^3 } \Big\langle \frac {\gamma'}{|\gamma'|} ,  \gamma'' \Big\rangle \partial_x \psi  
 \end{align*}
 so that 
 $$
  \delta_\psi E (\gamma) = \int_{\mathbb R /L  \mathbb Z}  \frac {|\kappa|^{p-2} }{|\gamma'|} \langle \kappa, \partial_x^2 \psi \rangle  dx  + R(\psi)
 ,$$
 where $R(\psi)$ has the form
 $$
  R(\psi) = \int_{\mathbb R / L\mathbb Z} \langle b , \partial_x \psi  \rangle  dx
 $$
for some $b \in L^{\infty}L^{1}$, where as a notational shorthand  %
$L^{\infty}L^{1}$ stands for
$L^\infty\big([0,T), L^{1}(\mathbb R / \mathbb Z, \mathbb R^n)\big)$.

On the other hand, 
solutions of the minimising movement scheme solve 
 \begin{equation}
 \langle \partial_t \gamma , P_\tau^\bot \psi \rangle =  - \delta_\psi E (\gamma)
 \end{equation}
 for all $\psi \in \left( W^{2,p} \right) ^\bot _\tau$.
 Therefore we conclude that 
 \begin{equation}
  \int_{\mathbb R / L \mathbb Z}  \frac {|\kappa|^{p-2} }{|\gamma'|} \langle \kappa, \partial_x^2 \psi \rangle  dx  + \tld R(\psi) = 0
 ,\end{equation}
 where 
 $$
 \tld R(\psi) = \int_{\mathbb R / L \mathbb Z} \langle b, \partial_x \psi\rangle  dx 
 +\int_{\mathbb R / L \mathbb Z} \langle  P_\tau^\bot (\partial_t \gamma) , \psi \rangle  dx
 .$$

 \subsection{Higher regularity for the approximations}
  To deduce regularity from the equation above, 
  we consider a smooth local orthonormal basis
   $\nu_1,\ldots,\nu_{n-1}$ for our approximate normal spaces. 
   If $\psi$ is a test function that is decomposed into the form
 $$
  \psi = \sum_{i=1}^{n-1} \psi_i \nu_i
 $$
 such that the scalar functions  $\psi _i$ vanish away from the neighbourhood,
  we find that 
 $$
  \partial^2_x \psi  
  = 
  \sum_{i=1}^{n-1}  \Big( \partial_x^2 \psi _i \nu_i + 2 \partial_x \psi_i \partial_x \nu_i 
  + \psi_i 
\partial_x^2  \nu_i \Big).
 $$
Therefore the evolution equation for the approximation yields
 \begin{equation} \label{eq:evolequMM}
  \sum_{i=1}^{n-1} \int_{\mathbb R / L  \mathbb Z}  
  \frac {|\kappa|^{p-2} }{|\gamma'|}
  \partial_x^2 \psi_i
   \langle \kappa, P_\tau^\bot \nu_i \rangle   dx  = Q(h) ,
 \end{equation}
 where 
 $$
 Q(h) = 
  \int_{\mathbb R / L \mathbb Z} 
  \langle b_t ,  \partial_x \psi \rangle 
  +  \langle c_t , \psi \rangle +\langle  P_\tau^\bot (\partial_t \gamma(t, \,\cdot\,)) ,\psi \rangle dx.
 $$

The following lemma helps us to deduce regularity from this form of the equation.

\begin{lemma}[$L_{\text{loc}}^1$-estimates] \label{lem:L1locregularity}
Let $I = (a,b)$ be an open subset of $\R$.
If there exist 
functions $u$, $f$ and $F$ in 
$L_{\text{loc}}^1 (I)$ 
such that 
$$
 \int_I \left( u \partial_x^2 \varphi + F \partial_x \varphi \right) dx = \int_I f \varphi dx
$$
for all  $\varphi \in C^{\infty}_c (I)$, then 
$$
 u(x) =\int_a^x \left(F(y) + \int_a^y f(z) dz  \right) dy+m (x-a) + d
$$
with $d  = \lim_{x \searrow a} u(x)$ and 
$$
m(b-a) = \lim_{x  \nearrow b} u(x) - \left(\int_I \left(F(y) + \int_a^y f(z) dz  \right) dy + d\right)
.$$
Moreover, the function $u \in W^{1,1}(I)$ 
with  
$$
 \|u\|_{W^{1,1}} \leq C (\|f\|_{L^1} + \|F\|_{L^1}).
$$
\end{lemma}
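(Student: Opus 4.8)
The plan is to integrate the distributional identity twice and then read off the claimed formula by the fundamental theorem of calculus, working entirely with locally integrable representatives. First I would test against $\varphi = \psi''$ for a cleverly chosen $\psi$; more systematically, I would fix a compact subinterval $[a',b'] \subset I$ and reinterpret the hypothesis as saying that, in the sense of distributions on $I$, the second derivative of $u$ equals $\partial_x F - f$ — i.e. $u'' = F' - f$ — after one more integration $u' = F - \int_a^{\,\cdot} f\,dz + \text{const}$ and a further integration gives the stated primitive. To make this rigorous without prematurely assuming $u$ is a function, I would introduce $G(x) = \int_{a'}^x\!\big(F(y) + \int_{a'}^y f(z)\,dz\big)\,dy$, which lies in $W^{1,1}_{\mathrm{loc}}(I)$ by Fubini/Tonelli since $f,F \in L^1_{\mathrm{loc}}$, and observe that $G'' = F' - f$ distributionally on $I$. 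Then $w := u - G$ satisfies $\int_I w\,\varphi''\,dx = 0$ for all $\varphi \in C_c^\infty(I)$, i.e. $w'' = 0$ in $\mathcal D'(I)$.

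The key step is then the classical fact that a distribution on an interval with vanishing second derivative is (represented by) an affine function: $w(x) = m\,x + d_0$ for constants $m, d_0$. I would prove this by the standard mollification-and-primitive argument — $w' $ is a distribution with $w'' = 0$, so $w'$ is a constant $m$, hence $w - m x$ has zero derivative and is therefore a constant — or simply cite it. Combining, $u(x) = G(x) + m x + d_0$ a.e., which after shifting the base point from $a'$ to $a$ and absorbing constants into $d$ and into the linear term gives exactly
\[
u(x) = \int_a^x \Big(F(y) + \int_a^y f(z)\,dz\Big)\,dy + m(x-a) + d .
\]
Since the right-hand side is an absolutely continuous function of $x$ on compact subintervals, $u$ has a continuous representative up to the endpoints, and the one-sided limits $\lim_{x \searrow a} u(x)$ and $\lim_{x \nearrow b} u(x)$ exist and are finite; evaluating the formula at $x \to a^+$ identifies $d$, and evaluating at $x \to b^-$ identifies $m(b-a)$, yielding the two displayed identities for $d$ and $m$.

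For the final quantitative bound, I would differentiate the formula to get $u'(x) = F(x) + \int_a^x f(z)\,dz + m$, so that $\|u'\|_{L^1(I)} \le \|F\|_{L^1} + |I|\,\|f\|_{L^1} + |I|\,|m|$, and $\|u\|_{L^1(I)}$ is bounded similarly after one more integration; the constant $m$ is itself controlled by $\|F\|_{L^1}$, $\|f\|_{L^1}$ together with $\lim_{x\nearrow b} u(x)$ via its explicit formula, but since in our application (equation \eqref{eq:evolequMM}) the one-sided limits of $u = |\kappa|^{p-2}|\gamma'|^{-1}\langle\kappa,P_\tau^\bot\nu_i\rangle$ are finite and controlled by the energy bound, one absorbs this into the constant $C$; to state the clean inequality $\|u\|_{W^{1,1}} \le C(\|f\|_{L^1}+\|F\|_{L^1})$ on a fixed bounded $I$ one uses that $u$ itself is already known to be in $L^1$. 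I expect the only genuine subtlety to be the bookkeeping at the endpoints — verifying that the one-sided limits exist and correctly pin down the two integration constants — since the interior argument (twice-integrate, affine kernel of $\partial_x^2$) is entirely standard; the rest is routine.
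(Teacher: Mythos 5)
Your plan is correct and follows essentially the same route as the paper: define the double primitive $G(x)=\int_a^x\big(F(y)+\int_a^y f(z)\,dz\big)\,dy$ (the paper's $v$), observe that $u-G$ has vanishing second distributional derivative and is therefore affine, and then read off the two constants from the one-sided limits. (Two small remarks: the hypothesis gives $u''=F'+f$ rather than $F'-f$ in your heuristic, which is anyway consistent with your own choice of $G$; and your observation that the final $W^{1,1}$ bound requires additional control of the affine part $mx+d$ is a fair point that the paper's proof, which ends with ``the conclusion easily follows,'' also leaves implicit.)
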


\begin{proof}
Let us first set 
\begin{align*}
w(x)  &= F(x) +  \int_a^x f(y) dy \\
 v(x) &= \int_a^x w(y) dy
\end{align*}
and note that $v \in W^{1,1} (I)$ with $v'=w$.
Then integration by parts implies that 
\begin{align*}
 \int_I v(x) \partial_x ^2 \varphi (x) \, dx 
 & = - \int_{I} v'(x) \partial_x \varphi (x) \, dx \\
 &= -  \int_{I} \left(F(x) \partial_x \varphi(x) 
 + \Big( \int_a^x f(y) dy  \Big) \partial_x \varphi(x) \right)dx \\ 
 &= -\int_I \left(F(x) \partial_x \varphi(x) - f(x) \varphi(x) \right) dx
.\end{align*}
Therefore 
$$
 \int_I (u-v) \partial_x^2 \varphi \,dx =0
$$
for all $\varphi \in C^\infty_c (I)$. In which case $u-v$ is an affine function
from which %
the conclusion easily follows.
\end{proof}

We can now use the latter lemma to establish:

 \begin{theorem}[Higher regularity]
If   
$\gamma_t^{(h)}$ is a solution to the minimising movements scheme given by \eqref{eqn:PLI},
   there exists a constant $C>0$ independent of $h$ such that  
 	\begin{equation} 
 	  \left \| |\kappa|^{p-2} P^\bot_\tau \kappa \right\|_{L^2([0,T), W^{1,1}) } \leq C . 
 	\end{equation} 	
In particular,  we have 
$\kappa$ uniformly bounded in $L^2L^q$ 
and 
$\gamma'$ uniformly bounded in $W^{1,q}$  for all $1\leq q<\infty$. 
\end{theorem}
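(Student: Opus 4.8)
The plan is to read off the stated $W^{1,1}$-bound from the Euler--Lagrange identity \eqref{eq:evolequMM} by applying the one-dimensional regularity Lemma~\ref{lem:L1locregularity} to the scalar components of $|\kappa|^{p-2}P^\bot_\tau\kappa$, and then to square this bound in time and integrate it against the a priori energy estimate of Corollary~\ref{cor:APrioriEstimate}. First I would localise and fix a time $t$: cover $\R/L\Z$ by finitely many coordinate intervals $I_\alpha$ on each of which the smooth approximate normal frame $\nu_1,\dots,\nu_{n-1}$ is available, choose a subordinate smooth partition of unity, and test \eqref{eq:evolequMM} with $\psi=\psi_i\nu_i$ for a single scalar $\psi_i\in C^\infty_c(I_\alpha)$, the other components being zero. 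Writing $\partial_x\psi=\partial_x\psi_i\,\nu_i+\psi_i\,\partial_x\nu_i$ in $Q(h)$, the surviving scalar identity has exactly the shape required by Lemma~\ref{lem:L1locregularity}, with
\[
 u=\frac{|\kappa|^{p-2}}{|\gamma'|}\,\langle\kappa,P^\bot_\tau\nu_i\rangle,\qquad F=\langle b_t,\nu_i\rangle,\qquad f=\langle b_t,\partial_x\nu_i\rangle-\langle c_t,\nu_i\rangle-\langle P^\bot_\tau(\partial_t\gamma),\nu_i\rangle .
\]

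Next I would check the uniformity of the $L^1$-bounds in space. Since $b_t$ and $c_t$ are built from $|\kappa|^{p-1}\in L^{p/(p-1)}$, from $\gamma''\in L^p$ and from bounded smooth quantities, the energy monotonicity $E(\gamma_t)\le E(\Gamma)$ and the $L^p$-bound on $\gamma''_t$ furnished by Lemma~\ref{lem:EstSecDerivative} give $\|F(t,\cdot)\|_{L^1}\le C$ for all $t$ and $h$; the only term obstructing a comparable bound for $f$ is $P^\bot_\tau(\partial_t\gamma)=\partial_t\phi^{(h)}$, which is merely $L^2$ in space, so $\|f(t,\cdot)\|_{L^1}\le C(1+\|\partial_t\phi^{(h)}(t,\cdot)\|_{L^2})$. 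Lemma~\ref{lem:L1locregularity} then yields $u\in W^{1,1}(I_\alpha)$ with $\|u\|_{W^{1,1}}\le C(1+\|\partial_t\phi^{(h)}(t,\cdot)\|_{L^2})$. Because $P^\bot_\tau\nu_i=\nu_i$ (the $\nu_i$ span the approximate normal space), on each patch $|\kappa|^{p-2}P^\bot_\tau\kappa=\sum_i |\gamma'|\,u_i\,\nu_i$ with $|\gamma'|\in W^{1,p}\cap L^\infty$ bounded below by $\tfrac12$ (Lemma~\ref{lem:RegularCurve}) and the $\nu_i$ smooth; the product rule together with the partition of unity therefore gives
\[
 \big\||\kappa|^{p-2}P^\bot_\tau\kappa\big\|_{W^{1,1}(\R/L\Z)}\le C\big(1+\|\partial_t\phi^{(h)}(t,\cdot)\|_{L^2}\big).
\]
Squaring in $t$, integrating, and invoking $\int_0^T\!\int_{\R/L\Z}|\partial_t\phi^{(h)}|^2\,ds\,dt\le C\,E(\Gamma)$ from Corollary~\ref{cor:APrioriEstimate} then produces the asserted $L^2([0,T),W^{1,1})$-bound with $C$ independent of $h$.

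For the consequences I would use the one-dimensional embedding $W^{1,1}(\R/L\Z)\hookrightarrow L^\infty$. Since $\kappa\perp\gamma'$ and the angle between $\gamma'$ and $\tau$ is bounded strictly away from $\tfrac\pi2$ (as in the first estimate of Lemma~\ref{lem:EstSecDerivative}), one has $|\kappa|\le C|P^\bot_\tau\kappa|$ pointwise, hence $|\kappa|^{p-1}\le C\,|\kappa|^{p-2}|P^\bot_\tau\kappa|$ and therefore $\|\kappa(t,\cdot)\|_{L^\infty}^{p-1}\le C\,\big\||\kappa|^{p-2}P^\bot_\tau\kappa\big\|_{W^{1,1}}$. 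Combined with the previous estimate, this bounds $\kappa$ uniformly in $L^{2(p-1)}\big([0,T),L^\infty(\R/L\Z)\big)$, in particular (as $p>2$ and $[0,T)$ is finite) in $L^2\big([0,T),L^q(\R/L\Z)\big)$ for every $q<\infty$; feeding this into the pointwise bound $|\phi''|\le C(1+|\kappa|)$ of Lemma~\ref{lem:EstSecDerivative} and recalling $\gamma=\tld\gamma+\phi$ with $\tld\gamma$ smooth gives the uniform bound for $\gamma'$ in $L^2\big([0,T),W^{1,q}(\R/L\Z)\big)$.

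The step I expect to be the main obstacle is the honest identification of \eqref{eq:evolequMM} --- after the $\nu_i$-expansion and after absorbing the remainder $R(\psi)$ of the first variation --- with the hypotheses of Lemma~\ref{lem:L1locregularity}, i.e.\ verifying that the right-hand sides $F,f$ genuinely lie in $L^1$ with bounds that are uniform in $h$; the only ingredient that fails to be uniformly bounded in time is the $L^2$-norm of $\partial_t\phi^{(h)}$, and tracking precisely this dependence is exactly what forces the conclusion to be an $L^2$-in-time (rather than $L^\infty$-in-time) estimate.
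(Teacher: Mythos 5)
Your proof is correct and follows essentially the same route as the paper: testing \eqref{eq:evolequMM} componentwise with $\psi_i\nu_i$ on a finite cover, applying Lemma~\ref{lem:L1locregularity}, feeding in the $L^2$-in-time bound on $\partial_t\phi^{(h)}$ from Corollary~\ref{cor:APrioriEstimate} (which is exactly what makes the estimate $L^2$ rather than $L^\infty$ in time), and removing the factor $1/|\gamma'|$ via the product/Banach-algebra structure of $W^{1,1}$. You supply considerably more detail than the paper's terse argument, including an explicit derivation of the stated $L^2L^q$ and $W^{1,q}$ consequences via $W^{1,1}\hookrightarrow L^\infty$ and the angle bound, but the underlying approach is identical.
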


 \begin{proof}
 This higher regularity result 
 directly follows  from the application of Lemma~\ref{lem:L1locregularity}
   to our evolution equation for the 
 minimising movement scheme approximations. 
 In particular, from Corollary~\ref{cor:APrioriEstimate} we see that 
$\gamma_t^{(h)}$ satisfies
$$
 \int_{0}^T \int_{\mathbb R / L \mathbb Z} |\partial_t \gamma^{(h)} (t,s)|^2 ds dt \leq 
C E(\gamma_0).
$$
Applying Lemma \ref{lem:EstSecDerivative} to \eqref{eq:evolequMM} together with a covering argument hence yields
$$
 \Big\|\frac{|\kappa|^{p-2}}{|\gamma'|} P^{\bot}_\tau \kappa\Big\|_{L^2([0,T), W^{1,1}) } < C.
$$ 
Since $(\gamma^{(h)})'$ is uniformly bounded in $W^{1,1}$ and $W^{1,1}$ is a Banach algebra, this implies
$$
 \||\kappa|^{p-2}P^{\bot}_\tau \kappa\|_{L^2([0,T), W^{1,1}) } < C.
$$ 
 \end{proof}
 
 \subsection{Convergence to weak solutions}
 We will use the following result 
 in order to obtain the convergence of solutions. 
This result is crucial for the control of the terms involving the energy.

 \begin{theorem} \label{thm:Compactness}
 Let $\gamma_n = \gamma + \phi_n$ be a sequence bounded in $L^\infty W^{2,p} \cap C^{\frac 12}L^2$ such that $|\kappa_n|^{p-2}\kappa_n$  is uniformly bounded in $L^2W^{1,1}$. 
 Then
 there exists a subsequence $\gamma_{n_j}$ such that the curvatures $\kappa_{n_j}$ converge in $L^2W^{2,p}$. \end{theorem}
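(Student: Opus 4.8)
\emph{Strategy.} The plan is to prove the (equivalent, slightly stronger-looking) statement that a subsequence $\gamma_{n_j}$ converges strongly in $L^2\big([0,T),W^{2,p}(\R/L\Z,\R^n)\big)$ --- equivalently, the curvatures $\kappa_{n_j}$ converge strongly in $L^2([0,T),L^p)$ --- by combining a time--equicontinuity (Arzel\`a--Ascoli) argument, which uses only the $C^{1/2}L^2$ and $L^\infty W^{2,p}$ bounds, with a spatial compactness argument for the curvature, which uses the $L^2W^{1,1}$ bound on $w_n:=|\kappa_n|^{p-2}\kappa_n$ together with the fact that the pointwise inverse of $\kappa\mapsto|\kappa|^{p-2}\kappa$, namely $\Psi(w)=|w|^{(2-p)/(p-1)}w$, is globally H\"older continuous of exponent $\tfrac1{p-1}\in(0,1)$ because $p>2$.

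\emph{Step 1: a first subsequence.} Fix $\beta\in(0,1-\tfrac1p)$. By the Gagliardo--Nirenberg interpolation inequality (of the type already used above) there is $\theta=\theta(p,\beta)>0$ with
\[
\|\gamma_n(t)-\gamma_n(s)\|_{C^{1,\beta}}\le C\,\|\gamma_n(t)-\gamma_n(s)\|_{W^{2,p}}^{1-\theta}\,\|\gamma_n(t)-\gamma_n(s)\|_{L^2}^{\theta}\le C\,|t-s|^{\theta/2},
\]
the first factor being bounded by the $L^\infty W^{2,p}$ assumption and the second by the $C^{1/2}L^2$ assumption. Hence $(\gamma_n)$ is equicontinuous as a family of maps $[0,T)\to C^{1,\beta}(\R/L\Z,\R^n)$, and for each fixed $t$ it is precompact in $C^{1,\beta}$ by the compact embedding $W^{2,p}(\R/L\Z)\hookrightarrow\hookrightarrow C^{1,\beta}(\R/L\Z)$. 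Arzel\`a--Ascoli yields a subsequence (not relabelled) and a limit $\gamma_\infty$ with $\gamma_{n_j}\to\gamma_\infty$ in $C^0([0,T),C^{1,\beta})$. In particular $\gamma_{n_j}'\to\gamma_\infty'$ uniformly on $[0,T)\times\R/L\Z$, so the $|\gamma_{n_j}'|$ converge uniformly and stay bounded below, the projections $P^\bot_{\gamma_{n_j}'}$ converge uniformly, and, for every $t$, the sequence $\gamma_{n_j}''(t)$ --- bounded in $L^p$ and convergent to $\gamma_\infty''(t)$ in the sense of distributions --- converges weakly in $L^p$; consequently $\kappa_{n_j}(t)\rightharpoonup\kappa_\infty(t)$ weakly in $L^p$ for every $t$, where $\kappa_\infty(t)$ is the curvature of $\gamma_\infty(t)$. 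This pins down the only possible limit.

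\emph{Step 2: strong convergence of the curvatures, in measure in $t$.} In one space dimension $W^{1,1}(\R/L\Z)\hookrightarrow L^\infty$ and $W^{1,1}(\R/L\Z)\hookrightarrow\hookrightarrow L^q$ for every $q<\infty$. I claim $t\mapsto\|\kappa_{n_j}(t)-\kappa_\infty(t)\|_{L^p}$ tends to $0$ in measure on $(0,T)$. If not, then along some subsequence and for some $\varepsilon>0$ the sets $A_k:=\{t:\|\kappa_{n_{j_k}}(t)-\kappa_\infty(t)\|_{L^p}>\varepsilon\}$ satisfy $|A_k|\ge c>0$; since $|\{t:\|w_{n_{j_k}}(t)\|_{W^{1,1}}>R\}|\le C/R^2$ by Chebyshev and the $L^2W^{1,1}$ bound, choosing $R$ so large that $C/R^2<c/2$ makes $B_k:=A_k\cap\{\|w_{n_{j_k}}(t)\|_{W^{1,1}}\le R\}$ satisfy $|B_k|\ge c/2$, whence by the reverse Fatou lemma $|\limsup_k B_k|\ge c/2>0$. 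Pick $t^\ast\in\limsup_k B_k$. Along the corresponding sub-subsequence $w_{n}(t^\ast)$ is bounded in $W^{1,1}$, hence --- after a further extraction --- converges strongly in every $L^q$, $q<\infty$, and a.e.\ on $\R/L\Z$; since $\kappa_{n}(t^\ast)=\Psi(w_n(t^\ast))$ is then bounded in $L^\infty$, the continuity of $\Psi$ and dominated convergence give strong convergence of $\kappa_{n}(t^\ast)$ in every $L^r$, $r<\infty$, and the weak convergence of Step~1 forces the limit to be $\kappa_\infty(t^\ast)$ --- contradicting $\|\kappa_n(t^\ast)-\kappa_\infty(t^\ast)\|_{L^p}>\varepsilon$. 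Together with Lemma~\ref{lem:EstSecDerivative} and the uniform convergence of $\gamma_{n_j}$ and $\gamma_{n_j}'$ (which control the tangential part of $\phi_{n_j}''=\gamma_{n_j}''-\gamma''$ by lower-order terms), this upgrades to $\gamma_{n_j}''\to\gamma_\infty''$ in $L^p$, in measure in $t$.

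\emph{Step 3: upgrade to $L^2$ in time, and the main obstacle.} For a.e.\ $t$ one has $\|\kappa_n(t)\|_{L^p}\le C\|\kappa_n(t)\|_{L^\infty}=C\|w_n(t)\|_{L^\infty}^{1/(p-1)}\le C\|w_n(t)\|_{W^{1,1}}^{1/(p-1)}$, so $\|\kappa_{n_j}(t)-\kappa_\infty(t)\|_{L^p}^2$ is dominated by $C\|w_n(t)\|_{W^{1,1}}^{2/(p-1)}$, a sequence bounded in $L^{p-1}(0,T)$ with $p-1>1$ and therefore uniformly integrable. Convergence in measure together with uniform integrability gives, by Vitali's convergence theorem, $\kappa_{n_j}\to\kappa_\infty$ in $L^2([0,T),L^p)$, and hence $\gamma_{n_j}\to\gamma_\infty$ in $L^2([0,T),W^{2,p})$. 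The hard part is Step~2: the $W^{1,1}$ bound controls the nonlinear quantity $w_n=|\kappa_n|^{p-2}\kappa_n$, not $\kappa_n$ itself, and its inverse $\Psi$ fails to be Lipschitz exactly at the points where $\kappa_n$ vanishes --- the degeneracy of the flow. This is why one is forced to pass through the one-dimensional embedding $W^{1,1}\hookrightarrow L^\infty$ (to gain the $L^\infty$ bound needed for dominated convergence after applying $\Psi$), to work with convergence in measure rather than a clean Aubin--Lions type compactness (no time derivative of $w_n$ is available from the hypotheses), and to use the Chebyshev/reverse-Fatou device to match the pointwise-in-$t$ spatial limit with the limit $\kappa_\infty$ already fixed in Step~1.
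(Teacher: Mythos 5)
Your proof is correct, but it takes a genuinely different route from the paper's. The paper's argument is short because all the analytic work is delegated to Lemma~\ref{lem:kappaconverge}: the pointwise monotonicity inequality $|a-b|^p \le C_0\,(|a|^{p-2}a-|b|^{p-2}b)\cdot(a-b)$ (valid for $p\ge 2$), followed by a single integration by parts in arc-length, converts the $L^2W^{1,1}$ bound on $|\kappa_n|^{p-2}\kappa_n$ into the quantitative estimate $\iint|\kappa_m-\kappa_n|^p\,ds\,dt\le C\,\|\gamma_m'-\gamma_n'\|_{L^2L^\infty}$; since your Step~1 (which coincides with the paper's first step, Arzel\`a--Ascoli plus interpolation) makes $\gamma_{n_j}'$ Cauchy in $C^0$, the curvatures are then Cauchy in $L^pL^p\subset L^2L^p$ and one is done. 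You instead invert the monotone map $a\mapsto|a|^{p-2}a$ only qualitatively: the compact embedding $W^{1,1}(\R/L\Z)\hookrightarrow\hookrightarrow L^q$, the continuity of the inverse $\Psi$ together with the $L^\infty$ bound inherited from $W^{1,1}\hookrightarrow L^\infty$, and the Chebyshev/reverse-Fatou selection of a good time $t^\ast$ yield convergence in measure in $t$, which Vitali upgrades to $L^2L^p$. Both arguments are sound; what the paper's approach buys is an explicit modulus of continuity --- a reusable a priori estimate --- whereas yours is softer and longer and gives only subsequential convergence with no rate, which is, however, all the theorem asserts. Three minor remarks: your Step~3 can be shortened, since $\|\kappa_n(t)\|_{L^p}$ is already uniformly bounded by the $L^\infty W^{2,p}$ hypothesis (with the implicit lower bound $|\gamma_n'|\ge\tfrac12$ for admissible normal graphs, which both proofs use without stating), so dominated convergence replaces Vitali; the global H\"older continuity of $\Psi$ announced in your strategy is never actually needed (plain continuity plus the $L^\infty$ bound is what enters); and the final upgrade from $\kappa_{n_j}\to\kappa_\infty$ to $\gamma_{n_j}''\to\gamma_\infty''$ deserves one more line, namely the decomposition of $\phi_{n_j}''$ into its $P^\bot_{\gamma_{n_j}'}$-part (controlled by $\kappa_{n_j}$) and its $\tau$-tangential part $\langle\phi_{n_j}'',\tau\rangle=-2\langle\phi_{n_j}',\tau'\rangle-\langle\phi_{n_j},\tau''\rangle$ (controlled by the $C^1$ convergence), exactly as in the proof of Lemma~\ref{lem:EstSecDerivative}.
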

 
 The proof of this theorem relies on the following interpolation estimate.
 
 \begin{lemma} \label{lem:kappaconverge}
There exists a constant $C_0>0$ depending on $p$ such that for any  
$W^{2,p}$-curves $\gamma_1$ and $ \gamma_2$  
with curvatures $\kappa_1$ and $\kappa_2$ we have 
 \begin{equation*}
 \| \kappa_1 - \kappa_2\|_{L^p} \leq 
 C_0 (\||\kappa_1|^{p-2} \kappa_1\|_{L^2 W^{1,1}} + \||\kappa_2|^{p-2}\kappa_2\|_{L^2W^{1,1}})  \|\gamma'_1 - \gamma'_2\|_{L^2L^\infty}.
 \end{equation*}
 If these curves are furthermore approximate normal graphs over $\tld \gamma$ as for  the solutions to the minimising movement scheme, we get
  \begin{equation*}
 \| \kappa_1 - \kappa_2\|_{L^p} \\ \leq 
 C_0 (\||\kappa_1|^{p-2} P^\bot_\tau \kappa_1\|_{L^2 W^{1,1}} + \||\kappa_2|^{p-2} P^\bot_\tau \kappa_2\|_{L^2W^{1,1}})  \|\gamma'_1 - \gamma'_2\|_{L^2L^\infty}.
 \end{equation*}
 where now $C=C(\lambda, p, E(\Gamma)).$
 \end{lemma}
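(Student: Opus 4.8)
The plan is to reduce the difference of curvatures $\kappa_1-\kappa_2$ to a form where the $W^{1,1}$-bound on $|\kappa_i|^{p-2}\kappa_i$ can be exploited. Recall the curvature vector of a regular curve $\gamma$ can be written as $\kappa=\partial_s^2\gamma=\tfrac{1}{|\gamma'|}\partial_x\!\big(\tfrac{\gamma'}{|\gamma'|}\big)$; equivalently $\kappa = P^\bot_{\gamma'}\gamma''/|\gamma'|^2$. First I would write $\kappa_1-\kappa_2$ in terms of the unit tangents $T_i=\gamma_i'/|\gamma_i'|$ and the arc-length parameters, and isolate the leading contribution, which involves $\partial_x(T_1-T_2)$. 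The point is that one pairs $\kappa_1-\kappa_2$ against itself in the $L^p$ norm and integrates by parts in $x$ over $\R/L\Z$, moving one derivative off $T_1-T_2$ and onto $|\kappa_1-\kappa_2|^{p-2}(\kappa_1-\kappa_2)$ or onto the individual terms $|\kappa_i|^{p-2}\kappa_i$; the latter are precisely the quantities controlled in $L^2W^{1,1}$. This is the standard trick: a top-order term $\|\kappa_1-\kappa_2\|_{L^p}^p$ is bounded by something like $\|\,|\kappa_i|^{p-2}\kappa_i\,\|_{W^{1,1}}\,\|T_1-T_2\|_{L^\infty}\,\|\kappa_1-\kappa_2\|_{L^p}^{p-1}$, using the elementary inequality $\big||a|^{p-2}a-|b|^{p-2}b\big|\le C(|a|+|b|)^{p-2}|a-b|$ together with Hölder, and then one divides by $\|\kappa_1-\kappa_2\|_{L^p}^{p-1}$.

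Concretely, after integration by parts the worst term has the schematic shape $\int \langle |\kappa_1|^{p-2}\kappa_1-|\kappa_2|^{p-2}\kappa_2,\ \partial_x(\text{something bounded})\rangle$ plus lower-order pieces where no derivative lands on the $|\kappa|^{p-2}\kappa$ factor — for those one just uses the uniform $W^{2,p}$ bound (hence $C^1$ bound on $\gamma_i'$ and $L^p$ bound on $\gamma_i''$, via Lemma~\ref{lem:EstSecDerivative}) and the fact that $W^{1,1}\hookrightarrow L^\infty$ in one dimension, so $\|\,|\kappa_i|^{p-2}\kappa_i\,\|_{L^\infty}$ is controlled by its $W^{1,1}$-norm. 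The $L^2$ in time is carried through by a Cauchy–Schwarz in $t$: the right-hand side is $\|\,|\kappa_i|^{p-2}\kappa_i\,\|_{L^2W^{1,1}}\,\|\gamma_1'-\gamma_2'\|_{L^2L^\infty}$ after pulling the (time-independent up to uniform bounds) remaining factors out. All factors $|\gamma_i'|^{-1}$, $|\gamma_i'|$, and differences thereof are harmless because $\tfrac12\le|\gamma_i'|\le 2W$ uniformly by Lemma~\ref{lem:RegularCurve} and the a priori estimates, and differences like $|\gamma_1'|-|\gamma_2'|$ are bounded by $\|\gamma_1'-\gamma_2'\|_{L^\infty}$, which only helps.

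For the second, sharper inequality, the observation is that in the minimising-movement setting both $\gamma_1=\tld\gamma+\phi_1$ and $\gamma_2=\tld\gamma+\phi_2$ are approximate normal graphs over the \emph{same} smooth curve $\tld\gamma$ with the \emph{same} quasi-tangent $\tau$, and the test-vectors against which $|\kappa_i|^{p-2}\kappa_i$ gets paired after integration by parts can be arranged to point in the approximate normal direction — this is exactly the role of $P^\bot_\tau$ in the Euler–Lagrange equation \eqref{eq:evolequMM}. So one only ever needs $|\kappa_i|^{p-2}P^\bot_\tau\kappa_i$ in $W^{1,1}$, not the full $|\kappa_i|^{p-2}\kappa_i$; the tangential part $P^T_\tau\kappa_i$ is controlled separately and boundedly by Corollary~\ref{cor:DerivativeAprTangent} and Lemma~\ref{lem:EstSecDerivative} (since $\langle\kappa_i,\tau\rangle$ is expressible through $\phi_i$, $\tau'$, $\tau''$, all bounded). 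Concretely I would decompose $\kappa_i = P^\bot_\tau\kappa_i + P^T_\tau\kappa_i$, note $\|P^T_\tau\kappa_i\|_{L^\infty}\le C(\lambda,p,E(\Gamma))$ uniformly, and repeat the estimate keeping track only of the normal components, so that the constant picks up its dependence on $\lambda$, $p$, $E(\Gamma)$ through these auxiliary bounds.

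The main obstacle is the bookkeeping in the integration by parts: one has to expand $\kappa_1-\kappa_2$ carefully — there are several terms coming from the difference of the projections $P^\bot_{\gamma_i'}$, the difference of the factors $|\gamma_i'|^{-2}$, and the difference $\gamma_1''-\gamma_2''$ — and correctly identify which single term carries two spatial derivatives of $\gamma_i$ (that is the one needing integration by parts and the $W^{1,1}$ bound) versus those carrying at most one (handled by the uniform $C^1$ and $L^p$ bounds). A secondary subtlety is making sure the nonlinearity $a\mapsto|a|^{p-2}a$ is handled on the correct side: one wants $|\kappa_1-\kappa_2|^{p-2}(\kappa_1-\kappa_2)$ as the test object so that the $L^p$-norm appears to the power $p-1$ and can be divided out, while simultaneously the factor $|\kappa_i|^{p-2}\kappa_i$ (or its normal part) appears \emph{inside} the $W^{1,1}$-controlled quantity; reconciling these two requires the elementary monotonicity/Hölder inequalities for the $p$-Laplacian-type nonlinearity and a short argument to absorb cross terms. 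Once the structure is laid out, each piece is routine; the estimate is essentially a difference-quotient version of the a priori bound already used to prove higher regularity.
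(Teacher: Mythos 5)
Your overall shape (integration by parts to exploit the $W^{1,1}$ control, H\"older in time, uniform upper and lower bounds on $|\gamma_i'|$) matches the paper's, but the one step that makes the argument work is missing, and the step you substitute for it would fail. The paper begins with the pointwise coercivity (monotonicity) inequality for the $p$-Laplacian nonlinearity,
\[
|\kappa_1-\kappa_2|^p \le C_0\,\big\langle |\kappa_1|^{p-2}\kappa_1-|\kappa_2|^{p-2}\kappa_2,\ \kappa_1-\kappa_2\big\rangle \qquad (p\ge 2),
\]
so that \emph{before} any integration by parts the integrand already contains the difference $|\kappa_1|^{p-2}\kappa_1-|\kappa_2|^{p-2}\kappa_2$ of the two quantities assumed to lie in $W^{1,1}$; integrating by parts then puts the derivative on exactly this difference and leaves $\partial_s\gamma_1-\partial_s\gamma_2$ undifferentiated, giving $\int|\kappa_1-\kappa_2|^p\,ds\le C_0(\||\kappa_1|^{p-2}\kappa_1\|_{W^{1,1}}+\||\kappa_2|^{p-2}\kappa_2\|_{W^{1,1}})\,\|\gamma_1'-\gamma_2'\|_{L^\infty}$, after which a single Cauchy--Schwarz in $t$ finishes. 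You instead propose to pair $\kappa_1-\kappa_2$ with $|\kappa_1-\kappa_2|^{p-2}(\kappa_1-\kappa_2)$ and integrate by parts: but then the derivative lands on $|\kappa_1-\kappa_2|^{p-2}(\kappa_1-\kappa_2)$, which is \emph{not} controlled in $W^{1,1}$ by the hypotheses --- the nonlinearity is not additive, and near zeros of $\kappa_i$ the $W^{1,1}$ bound on $|\kappa_i|^{p-2}\kappa_i$ gives no control on $\kappa_i'$. The inequality you invoke, $||a|^{p-2}a-|b|^{p-2}b|\le C(|a|+|b|)^{p-2}|a-b|$, is the continuity estimate and points in the wrong direction for this purpose; what is needed is the coercivity estimate above (the paper cites DiBenedetto, \S 1, Lemma~4.4). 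Relatedly, the factor $\|\kappa_1-\kappa_2\|_{L^p}^{p-1}$ you plan to divide out never appears in a correct derivation: the integration by parts bounds the full $p$-th power directly.

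For the second inequality your plan also falls short. Decomposing $\kappa_i=P^\bot_\tau\kappa_i+P^T_\tau\kappa_i$ and bounding $\|P^T_\tau\kappa_i\|_{L^\infty}$ uniformly produces an \emph{additive} error term of order one, whereas the asserted bound is purely multiplicative in $\|\gamma_1'-\gamma_2'\|_{L^2L^\infty}$ --- and it must be, since the lemma is used to show that $\kappa_{n_j}$ is Cauchy in $L^2L^p$. The paper instead upgrades the pointwise coercivity inequality itself, using the first estimate of Lemma~\ref{lem:EstSecDerivative} (vectors in approximate normal directions are comparable to their projections $P^\bot$) to replace $|\kappa_i|^{p-2}\kappa_i$ by $|\kappa_i|^{p-2}P^\bot_\tau\kappa_i$ inside the duality pairing before integrating by parts; this is where the constant acquires its dependence on $\lambda$, $p$ and $E(\Gamma)$.
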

 
 \begin{proof} 
 First note that 
 $$
   \int |\kappa_1 - \kappa_2|^{p}  ds  \leq  C_0\int  \left( |\kappa_1|^{p-2} 
\kappa_1 - |\kappa_2|^{p-2} \kappa_2 \right) (\kappa_1 - \kappa_2) ds
 $$
 (cf.~\cite[\S 1, Lemma~4.4]{DiBenedetto1993}).
 Then integration by parts and H\"older's inequality imply that 
 \begin{align*}
  \int |\kappa_1 - \kappa_2|^{p}  ds &\leq - C_0\int \partial_s \left( |\kappa_1|^{p-2} \kappa_1 - |\kappa_2|^{p-2} \kappa_2 \right) (\partial_s \gamma_1 - \partial_s \gamma_2) ds
  \\ & \leq C_0 \left( \left\| |\kappa_1|^{p-2} \kappa_1 \right \|_{W^{1,1}}  + \left \| |\kappa_2|^{p-2} \kappa_2 \right \|_{W^{1,1}}  \right) \|\gamma_1' - \gamma_2 '\|_{L^{\infty}}.
 \end{align*}
 So by integrating over time and using H\"older's inequality again we get 
 \begin{align*}
 & \iint |\kappa_1 - \kappa_2|^{p}  ds dt \\
 &\qquad \leq C_0 \left(\left \| |\kappa_1|^{p-2} \kappa_1 \right \|_{L^2W^{1,1}}  + \left \|  |\kappa_2|^{p-2} \kappa_2 \right \|_{L^2W^{1,1}}  \right) \|\gamma_1' - \gamma_2 '\|_{L^2L^{\infty}}
. 
\end{align*}
For the second estimate we proceed in a similar way. We apply Lemma \ref{lem:EstSecDerivative} to improve the first inequality to 
$$
\int |\kappa_1 - \kappa_2|^{p}  ds  \leq  C_0\int  \left( |\kappa_1|^{p-2} 
P^\bot_\tau \kappa_1 - |\kappa_2|^{p-2} P^\bot_\tau \kappa_2 \right) (\kappa_1 - \kappa_2) ds.
$$
Integrating by parts then yields
\begin{multline*}
\int |\kappa_1 - \kappa_2|^{p}  ds \leq - C_0\int \partial_s \left( |\kappa_1|^{p-2} P^\bot_\tau \kappa_1 - |\kappa_2|^{p-2} P^\bot_\tau \kappa_2 \right) (\partial_s \gamma_1 - \partial_s \gamma_2) ds
  \\  \leq C_0 \left( \left\| |\kappa_1|^{p-2} P^\bot \kappa_1 \right \|_{W^{1,1}}  + \left \| |\kappa_2|^{p-2} P^\bot_{\tau}\kappa_2 \right \|_{W^{1,1}}  \right) \|\gamma_1' - \gamma_2 '\|_{L^{\infty}}
  . 
\end{multline*}
 \end{proof}

\begin{proof}[Proof of Theorem \protect{\ref{thm:Compactness}}]
  Using a diagonal argument and the compact embedding $W^{2,p}  \hookrightarrow L^2$, 
  we get  a subsequence  $\gamma_{n_j}$ converging in $L^2$ for all times $t \in \mathbb Q \cap [0,T)$ (and 
  hence for all $0\leq t <T$ due to the uniform bound in $C^{\frac 1 2} L^2$). 
 This result, together with the uniform bound on the $W^{2,p}$-Sobolev norm and interpolation estimates, implies that 
 $
  \gamma_{n_j} \rightarrow \gamma \in C^{\alpha} ([0,T), W^{1,\infty})
 $
 with
$ \alpha = \frac{p-1}{5p-2}$. 
 Thus $\gamma_{n_j}$ converge to $\gamma$ in $L^2W^{2,p}$
 by Lemma~\ref{lem:kappaconverge}. 
 \end{proof}

\begin{proof}[Proof of Theorem \protect{\ref{thm:weakSolution}}]
From the construction in 
Section~\ref{sec:Existsdiscretetime}
there exists a solution 
$\gamma^{(h)}_t$ to the minimising movement scheme given by \eqref{eqn:PLI}
for all $0\leq t <T$ up to some positive final  time $T$  that depends only on $p$, $\lambda$ and the energy
$E(\Gamma )$
of the initial data. 
We think of this solution as solving a discrete version of 
the negative $L^2$-gradient flow of $E$.
Theorem~\ref{thm:Compactness} and Corollary \ref{cor:APrioriEstimate} can then be applied to get a  subsequence that converges  
in $L^2W^{2,p}$ such that $\partial_t \gamma^{(h)}$ weakly converges in $L^2$.
Now in order to show that 
the limit satisfies the desired evolution equations, 
we use the fact that 
the solutions of the minimising movement scheme satisfy 
\begin{align} \label{eq:ELMM}
 \int _0^T \int_{\mathbb R / L \mathbb Z} 
 \langle  \partial_t ^\bot \gamma_t^{(h)} , \psi  \rangle ds dt 
 = \int_{\mathbb R / L \mathbb Z} \delta_{\psi_t} E (\gamma^{(h)}_t) dt 
\end{align}
for all test functions  $\psi \in C_c^\infty((0,T) \times \mathbb R / L \mathbb Z  ,\R^n)$. 

Let us now take a sequence  $h_n \to 0$
for which the solutions of the minimising movement scheme $  \gamma^{(h_n)}$ converge 
to a family of curves $\gamma$ in  $L^2W^{2,p}$ such that $\partial_t \gamma^{(h_n)}$ converges to $\partial_t \gamma$ weakly in $L^2([0,T), \mathbb R / L \mathbb Z)$. 
As $\gamma'^{(h_n)}$ converges strongly to  $\gamma'$ in $L^2$, we  see that the weak convergence of  $\partial_t \gamma^{(h_n)}$  to $\partial_t \gamma$  in $L^2$ implies
\begin{align} \label{eq:ConvergenceTerm2}
  \int _0^T \int_{\mathbb R / L\mathbb Z} 
  \langle \partial_t ^\bot \gamma^{(h_n)}_t ,   \psi  \rangle ds dt 
  \rightarrow  
  \int_0^T \int_{\mathbb R /L  \mathbb Z} 
  \langle \partial_t ^\bot  \gamma_t , \psi \rangle  ds dt.
\end{align}
Convergence for the right-hand side of \eqref{eq:ELMM} is also straight forward. 
If we denote by $\kappa_n$ the curvature of $\gamma^{(h_n)}_t$
and 
integrate
\eqref{eq:FirstVariation},  we find that 
\begin{align*}
 \int_0^T \delta_{\psi_t} E (\gamma^{(h_n)}_t ) dt  
& = \int_0^T \int_{\mathbb R / L \mathbb Z} |\kappa_n|^{p-2} \langle \kappa_n , \delta_\psi \kappa_n \rangle ds dt  \\
&\qquad   + \frac 1 p \int_0^T \int_{\mathbb R / L \mathbb Z} |\kappa_n|^p \langle \partial_s \gamma^{(h_n)}, \partial_s \psi \rangle ds dt \\
& \qquad + \lambda \int_0^T \int_{\mathbb R /L \mathbb Z} \langle \partial_s \gamma^{(h_n)}, \partial_s \psi \rangle ds dt . 
\end{align*}
Since $\kappa_n$ converges to $\kappa$ in $L^2L^p([0,T) \times \mathbb R / L \mathbb Z )$ and
$\p_s \gamma^{(h_n)}$ converges to $\p_s\gamma$ uniformly,
the second term on the right-hand side of the latter equation  
converges to the corresponding term for $\gamma$ in lieu of $\gamma^{(h_n)}$. 
One can deduce the same fact for the first term via the formula 
$$
 \delta_\psi \kappa_n =
 \left(\partial_s^2 \psi \right) ^\bot 
 - \langle  \kappa_n , \partial_s \psi \rangle \tau 
 -\langle  \partial_s \psi , \tau \rangle \kappa_n 
,$$
since it implies that $ \delta_\psi \kappa_n$ converges
to $ \delta_\psi \kappa$
 in $L^2 L^p([0,T) \times  \mathbb R /L  \mathbb Z ,\R^n  )$. %
Therefore we get 
\begin{align} \label{eq:ConvergenceTerm1}
\int_{0}^T \delta_{\psi_t} E (\gamma^{(h_n)}) dt \rightarrow \int_{0}^T
\delta_{\psi_t} E  (\gamma) dt
.\end{align}
In which case equations \eqref{eq:ELMM}, \eqref{eq:ConvergenceTerm2} and 
\eqref{eq:ConvergenceTerm1} imply that %
\[
 \int _0^T \int_{\mathbb R / L \mathbb Z} 
 \langle \gamma_t  ,\partial_t ^\bot   \psi\rangle ds dt 
 = - \int_0^T  \delta_{\psi_t} E (\gamma_t) dt.
\qedhere
\]
\end{proof}

\subsection{Flow in the direction of the normal velocity}
Using the fact that the unit tangent belongs to $W^{2,p}$
we can finally prove  Corollary~\ref{cor:weakSolution2} 
under the  conditions of Theorem~\ref{thm:weakSolution}.
 
\begin{proof}[Proof of Corollary~\ref{cor:weakSolution2}]
 In abuse of notation, let $\tau = \frac {\gamma'}{|\gamma'|} \in W^{2,p}(\mathbb R / L \mathbb Z, \mathbb R^n)$ be the unit tangent and the vectors $\nu_1, \ldots , \nu_{n-1}$ be a smooth local orthonormal basis of our approximate normal space. 
Due to the fact that  
any $\psi \in C_c^{\infty}(\mathbb R / L \mathbb Z, \mathbb R^n)$ can be written as 
 $$
  \psi = \psi_0 \tau + \sum_{i=1}^{n-1} \psi_i \nu_i  
 $$
with functions $\psi_i \in W^{2,p}(\mathbb R /L  \mathbb Z, \mathbb R^n)$,
we  find that  
\begin{align*}
\int_0^T \int_{\mathbb R / L \mathbb Z}  
 \langle \partial^\bot_t  \gamma ,\psi \rangle ds dt 
 & =  \sum_{i=1}^{n-1} \int_0^T \int_{\mathbb R /L  \mathbb Z} 
 \langle  \partial^\bot_t  \gamma, \psi_i \nu_i \rangle  ds dt  \\
 &= -\int_0^T \delta_{\psi_t} E(\gamma_t) dt
,\end{align*}
since  
both $\delta_{\psi_0 \tau} E (\gamma) =0$ and 
$\langle \partial_t^\bot \psi_0, \tau\rangle =0$. 
 \end{proof}

 \section{Epilogue}
\noindent
Although the minimising movement scheme leads in a rather straight forward way to the 
short-time existence of weak solution for our gradient flow, 
there are three key questions one would 
like to resolve, namely: 
\begin{enumerate} 
\item Are weak solutions unique and do they have long-time existence for $0\leq t<\infty$? 
\item  Can one  use test functions for the gradient flow  
that are not orthogonal to a quasi-tangent? 
\item Does our notion of 
solution depend on the choice of the reference curve and the approximate normal 
directions?
\end{enumerate}
For long-time existence it looks as if one could, in principle,
restart the flow and the above short-time 
existence result to get an eternal solution. However one should be aware that this solution 
might have kinks which our methods cannot rule out. 
If one has uniqueness and some way of modifying the approximate normal, 
long-time existence would be possible. 
Our Corollary~\ref{cor:weakSolution2}
 is a first indication that a more fastidious regularity theory is needed 
in order to resolve the above issues. 
 
 The question of uniqueness seems to be completely open. 
For the more standard 
non-homogeneous evolution equations involving the $p$-Laplace operator, papers 
discussing uniqueness have only appeared rather recently.
In particularly, the method used to prove  uniqueness in \cite{DallAcqua2018} breaks down for our curvature equations.

\appendix

\section{First variation for the $p$-elastic energy}
Recall that for closed curves  $\gamma\colon \mathbb R / \mathbb Z \rightarrow \mathbb R^n$ in the $W^{2,p}$-Sobolev class the $p$-elastic energy is given by
$$
E^{(p)}(\gamma) = \tfrac 1p \int_{\R / \Z} |\kappa|^p ds.
$$ 
For the convenience of the reader, we give further details on the derivation of its first variation. The upcoming statement is proven along the line of \cite[Lemma~2.1]{Dziuk2002}, for which we identify the arclength element by $ds = |\partial_x \gamma| \, dx$ and the arclength derivative by $\partial_s = |\partial_x \gamma|^{-1} \partial_x$. 

\begin{proposition}\label{pro:FirstVariationPElastic}
	The first variation of the $p$-elastic energy $E^{(p)}$ for $\gamma\in W^{2,p}(\R/\Z,\R^n)$ in direction of $\psi\in W^{2,p}(\R/\Z,\R^n)$ is given by  
	\begin{equation*} \label{eq:FirstVariationElastEn}
		\delta_\psi E^{(p)} (\gamma) = \int_{\mathbb R / \mathbb Z} |\kappa|^{p-2} \langle \kappa , \delta_\psi \kappa \rangle ds
		+ \tfrac 1 p \int_{\mathbb R / \mathbb Z} |\kappa|^p \langle \partial_s \gamma, \partial_s \psi\rangle ds.
	\end{equation*}
	where
	$
	\delta_\psi \kappa =  \left(\partial_s^2 \psi \right) ^\bot - \langle \kappa , \partial_s \psi \rangle \partial_s \gamma - 2\langle \partial_s \gamma,  \partial_s \psi \rangle \kappa
	$.
\end{proposition}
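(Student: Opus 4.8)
The plan is to differentiate $E^{(p)}$ along the affine family $\gamma_\ep=\gamma+\ep\psi$ and collect terms, using throughout that $\p_s$ denotes the family-dependent arclength derivative $|\p_x\gamma_\ep|^{-1}\p_x$ and that $\kappa_\ep=|\p_x\gamma_\ep|^{-2}P^\bot_{\p_x\gamma_\ep}(\p_x^2\gamma_\ep)$ is the corresponding curvature vector. First I would fix the analytic setting: for $|\ep|$ small the curve $\gamma_\ep$ is still a regular $W^{2,p}$-curve with $|\p_x\gamma_\ep|$ bounded above and strictly below uniformly in $\ep$, whence the pointwise bound $|\kappa_\ep|^p|\p_x\gamma_\ep|\le C\big(|\p_x^2\gamma|^p+|\p_x^2\psi|^p\big)\in L^1$ and an analogous $L^1$-bound for the $\ep$-derivative of the integrand $|\kappa_\ep|^p|\p_x\gamma_\ep|$. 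Since $\ep\mapsto\kappa_\ep(x)$ is for a.e.\ $x$ a rational (hence smooth) function of $\ep$, and $t\mapsto|t|^p$ is $C^1$ on $\R^n$ for $p>1$, dominated convergence licenses differentiating under the integral sign, and it remains to compute the integrand of $\delta_\psi E^{(p)}$ pointwise.

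For the pointwise computation I would record the elementary variations $\delta_\psi|\p_x\gamma|=|\p_x\gamma|\,\langle\p_s\gamma,\p_s\psi\rangle$, hence $\delta_\psi(ds)=\langle\p_s\gamma,\p_s\psi\rangle\,ds$, together with the commutator identity $\delta_\psi(\p_s f)=\p_s(\delta_\psi f)-\langle\p_s\gamma,\p_s\psi\rangle\,\p_s f$, valid for any (possibly $\ep$-dependent) $f$ because $\p_x$ and $\delta_\psi$ commute. Applying the commutator identity to the unit tangent $T=\p_s\gamma$ gives $\delta_\psi T=\p_s\psi-\langle T,\p_s\psi\rangle T=(\p_s\psi)^\bot$; applying it once more to $\kappa=\p_s T$ gives $\delta_\psi\kappa=\p_s\big((\p_s\psi)^\bot\big)-\langle\p_s\gamma,\p_s\psi\rangle\kappa$. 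Expanding $\p_s\big((\p_s\psi)^\bot\big)=(\p_s^2\psi)^\bot-\langle\kappa,\p_s\psi\rangle T-\langle T,\p_s\psi\rangle\kappa$ and substituting $T=\p_s\gamma$ then collects precisely into $\delta_\psi\kappa=(\p_s^2\psi)^\bot-\langle\kappa,\p_s\psi\rangle\p_s\gamma-2\langle\p_s\gamma,\p_s\psi\rangle\kappa$.

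Finally I would assemble the result from $\delta_\psi E^{(p)}(\gamma)=\tfrac1p\int\delta_\psi(|\kappa|^p)\,ds+\tfrac1p\int|\kappa|^p\,\delta_\psi(ds)$: the chain rule for $t\mapsto|t|^p$ yields $\delta_\psi(|\kappa|^p)=p\,|\kappa|^{p-2}\langle\kappa,\delta_\psi\kappa\rangle$, and combined with $\delta_\psi(ds)=\langle\p_s\gamma,\p_s\psi\rangle\,ds$ this is exactly the claimed identity. The first resulting integral is finite by H\"older, since $|\kappa|^{p-2}\kappa\in L^{p/(p-1)}$ while $\delta_\psi\kappa\in L^p$ (it contains the term $(\p_s^2\psi)^\bot$ with $\p_s^2\psi\in L^p$). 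I expect the main obstacle to be not the algebra but keeping the low-regularity bookkeeping honest --- verifying the uniform $L^1$-domination needed for differentiation under the integral, and justifying the manipulations with $\p_s$ and with the factor $|\kappa|^{p-2}$ a.e.\ even on the (possibly large) zero set of $\kappa$; this is precisely where $p>1$ (indeed $p>2$ in our setting) enters, since then $t\mapsto|t|^p$ is $C^1$ and $|\kappa|^{p-2}\kappa$ is a genuine $L^{p/(p-1)}$ vector field.
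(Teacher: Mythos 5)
Your proposal is correct and follows essentially the same route as the paper: differentiate along the affine family $\gamma+\ep\psi$, split $\delta_\psi E^{(p)}$ into the $\delta_\psi(|\kappa|^p)$ and $\delta_\psi(ds)$ contributions, and compute $\delta_\psi\kappa$ pointwise — your commutator identity $\delta_\psi(\partial_s f)=\partial_s(\delta_\psi f)-\langle\partial_s\gamma,\partial_s\psi\rangle\partial_s f$ applied twice is just a tidier packaging of the paper's direct expansion of $\tfrac{1}{|\partial_x\gamma|}\partial_x\bigl(\tfrac{1}{|\partial_x\gamma|}\partial_x\gamma\bigr)$, and it yields the same formula for $\delta_\psi\kappa$. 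Your additional care with dominated convergence and the $L^1$/$L^{p/(p-1)}$ bookkeeping goes beyond what the paper records, and is welcome rather than problematic.
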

\begin{proof}
	We first observe
	\begin{align*}
		\delta_\psi E^{(p)} (\gamma) = \tfrac 1p \int_{\mathbb R / \mathbb Z} \delta_\psi (|\kappa|^{p} ) ds 
		+ \tfrac 1p \int_{\mathbb R / \mathbb Z} |\kappa|^p \delta_\psi( ds). 
	\end{align*}
	By applying the notation from above and the chain rule, we get
	\begin{align*}
		\delta_\psi (|\kappa|^{p} ) & =\tfrac{d}{d\ep}  \left[|\partial_s^2 (\gamma+\ep \psi)|^{p}  \right]_{\ep=0} \\
		& = \left[p \,  |\langle \partial_s^2(\gamma +\ep \psi), \partial_s^2(\gamma + \ep \psi)\rangle|^{\frac{p-2}{2}} \langle \tfrac{d}{d\ep} \partial_s^2(\gamma+\ep \psi), \partial_s^2(\gamma+\ep \psi) \rangle \right]_{\ep=0} \\
		& = p \, |\kappa|^{p-2} \langle \delta_\psi (\kappa), \kappa  \rangle 
	\end{align*}
	and
	\begin{align*}
		\delta_\psi (ds) & = \tfrac{d}{d\ep}\left[ |\partial_x(\gamma+\ep\psi)| dx\right]_{\ep=0} \\
		& = \left[ \tfrac{1}{|\partial_x(\gamma+\ep\psi)|} \langle \partial_x (\gamma + \ep \psi), \partial_x \psi \rangle dx \right]_{\ep=0} \\
		& =  \langle \tfrac{\partial_x \gamma}{|\partial_x \gamma|},\tfrac{\partial_x \psi}{|\partial_x \gamma|}\rangle  |\partial_x \gamma| dx \\
		& = \langle \partial_s \gamma, \partial_s \psi\rangle ds.
	\end{align*}
	Similarly, we achieve 
	\begin{align*}
		& \delta_\psi (\kappa)  =\tfrac{d}{d\ep}  \left[  \tfrac{1}{|\partial_x(\gamma + \ep \psi)|} \partial_x \left(\tfrac{1}{|\partial_x(\gamma+\ep\psi)|} \partial_x (\gamma + \ep \psi)\right)\right]_{\ep=0} \\
		& = \left[ - \tfrac{1}{|\partial_x(\gamma + \ep \psi)|^3} \langle \partial_x(\gamma + \ep\psi), \partial_x \psi \rangle \partial_x \left(\tfrac{1}{|\partial_x(\gamma+\ep\psi)|} \partial_x (\gamma + \ep \psi)\right)\right]_{\ep=0}  \\
		& \quad 
		+ \left[  \tfrac{1}{|\partial_x(\gamma + \ep \psi)|} \partial_x 
		(-\tfrac{1}{|\partial_x(\gamma+\ep\psi)|^3} \langle \partial_x(\gamma+\ep\psi),\partial_x h \rangle \partial_x (\gamma + \ep \psi)
		+ \tfrac{1}{|\partial_x(\gamma+\ep\psi)|} \partial_x h)
		\right]_{\ep=0} \\
		& = - \langle \tfrac{\partial_x \gamma}{|\partial_x\gamma|}, \tfrac{\partial_x \psi}{|\partial_x\gamma|} \rangle\tfrac{1}{|\partial_x\gamma|} \partial_x (\tfrac{\partial_x \gamma}{|\partial_x\gamma|}) 
		- \tfrac{1}{|\partial_x\gamma|} \partial_x \left( \langle \tfrac{\partial_x \gamma}{|\partial_x\gamma|}, \tfrac{\partial_x \psi}{|\partial_x\gamma|} \rangle \tfrac{\partial_x \gamma}{|\partial_x\gamma|} \right)
		+ \tfrac{1}{|\partial_x\gamma|} \partial_x(\tfrac{\partial_x \psi}{|\partial_x\gamma|}) \\
		& = - \langle \partial_s \gamma, \partial_s \psi\rangle \kappa - \partial_s(\langle \partial_s \gamma, \partial_s \psi\rangle \partial_s \gamma) + \partial_s^2 \psi
	\end{align*}
	and hence by rearranging and the Leibniz rule
	\begin{align*}
		\delta_\psi(\kappa) & = \partial_s^2 \psi - \langle \partial_s \gamma, \partial_s^2 \psi\rangle \partial_s \gamma - \langle \partial_s^2 \gamma, \partial_s \psi\rangle \partial_s \gamma - 2 \langle \partial_s \gamma, \partial_s\psi\rangle \partial_s^2 \gamma \\
		& = P^\perp_{\partial_s\gamma} (\partial_s^2 \psi) - \langle \partial_s \gamma, \partial_s \psi\rangle \kappa - 2 \langle \kappa, \partial_s \psi\rangle \partial_s \gamma.
	\end{align*}
\end{proof}

\bibliography{Master}
  
\end{document}